\theoremstyle{plain}
\newtheorem{theorem}{Theorem}[section]
\newtheorem{proposition}[theorem]{Proposition}
\newtheorem{lemma}[theorem]{Lemma}
\theoremstyle{definition}
\newtheorem{definition}{Definition}
\theoremstyle{remark}
\newtheorem{remark}{Remark}
\def\cA{{\cal A}}
\def\cC{{\mathcal C}}
\def\cc{{\curvearrowright}}
\def\F{{\mathbb F}}
\def\cX{{\cal X}}
\def\IRS{{\textrm{IRS}}}
\def\ker{\textrm{{Ker}}}
\def\cK{\mathcal K}
\def\cL{{\mathcal L}}
\def\L{{\mathcal L}}
\def\N{{\mathbb N}}
\def\P{{\cal P}}
\def\Sub{{\textrm{Sub}}}
\def\chix{{\raise.5ex\hbox{$\chi$}}}
\def\Z{{\mathbb Z}}
\newcommand{\bz}{\mathbb{Z}}
\newcommand{\og}{\omega}
\newcommand{\Lc}{\mathcal{L}}
\newcommand{\Ac}{\mathcal{A}}
\newcommand{\bk}{\mathbb{F}}
\newcommand{\ccZ}{\mathcal{Z}}
\def\sm{\setminus}
\def\ss{\subset}
\DeclareMathOperator{\rk}{rk}
\DeclareMathOperator{\rank}{rank}
\def\Q{\mathcal{Q}}
\begin{document}
\title{Invariant random subgroups of lamplighter groups }
\author{Lewis Bowen\footnote{email:lpbowen@math.utexas.edu, NSF grant DMS-0968762 and NSF CAREER Award DMS-0954606} \\ University of Texas \\ $\ $ \\ Rostislav Grigorchuk \footnote{email:grigorch@math.tamu.edu, NSF GRANT  DMS-1207699} \\ Texas A\&M University\\ $\ $ \\ Rostyslav Kravchenko \footnote{email:rkchenko@gmail.com} \\ University of Chicago}

\maketitle

\begin{abstract}


Let $G$ be one of the lamplighter groups $({\mathbb{Z}/p\bz})^n\wr\mathbb{Z}$ and $\Sub(G)$ the space of all subgroups of $G$. We determine the perfect kernel and Cantor-Bendixson rank of $\Sub(G)$. The space of all conjugation-invariant Borel probability measures on $\Sub(G)$ is a simplex. We show that this simplex has a canonical Poulsen subsimplex whose complement has only a countable number of extreme points. If $F$ is a finite group and $\Gamma$ an infinite group which does not have property $(T)$ then the conjugation-invariant probability measures on $\Sub(F\wr\Gamma)$ supported on $\oplus_\Gamma F$ also form a Poulsen simplex. 
\end{abstract}
\noindent

\noindent
\tableofcontents

\section{Introduction}



There has been a recent increase in studies of the action by conjugation of  a locally compact group $G$ on its space $\Sub(G)$ of closed subgroups with special attention paid to the invariant probability measures of this action. For example, see  \cite{AGV12, Vo12, ABBGNRS11, ABBGNRS12, Ve11, Sa11, Gr11,
Ve10, Bo10, BS06, DS02, GS99, SZ94}. A random closed subgroup whose law is invariant under conjugation is called an {\em invariant random subgroup} or IRS. One of the goals of this research is to classify the distributions of IRS's for interesting groups. For example Stuck-Zimmer \cite{SZ94} proved that every ergodic IRS of a higher rank simple Lie group is induced from a lattice subgroup. A complete classification  of IRS's of the infinite symmetric group has been obtained by A. Vershik \cite{Ve11}. Many interesting results and applications of IRS's to semisimple Lie groups are announced in \cite{ABBGNRS11} and obtained in \cite{ABBGNRS12}. An important subspace of the space of Schreier graphs of a group of intermediate growth is described by Y.Vorobets \cite{Vo12}.  The study of  IRS's  is  closely  related  with  the  study  of  central  characters on   groups.  Interesting  results  in this  direction  were  obtained  recently  by  Dudko  and  Medynets  \cite{DM11,DM12} who  showed  in  particular  that  groups  of  Thompson's-Higman  type  have  only  trivial 
central  characters.

Let us begin with some preliminary observations and  notations. Let  $G$ be a discrete countable group and
$\Sub(G)$ be the set of all subgroups of $G$ equipped with the topology induced by the Tychonoff topology of the
product space $\{0,1\}^G$ (a subgroup $H$ is identified with its characteristic function
$\xi_H\in\{0,1\}^G$). Explicitly, a sequence of subgroups $H_i$ converges to $H$ if the event $\{g\in H_i\}$ stabilizes to the event $\{g\in H\}$ for any $g\in G$. $\Sub(G)$ is a closed subset of a compact metrizable space, so is itself a compact metrizable space. The group $G$ acts on $\Sub(G)$
continuously by conjugation $g \cdot H := gHg^{-1}$ (\emph{adjoint action}). Let $\IRS(G)$ denote the space of all
conjugation-invariant Borel probability measures on $\Sub(G)$ with the
weak*-topology. More precisely, we view $\IRS(G)$ as a subspace of the Banach dual of the space of continuous functions on $\Sub(G)$. From this viewpoint we see that it is a {\em simplex}. More precisely, it is a convex closed metrizable subset $K$ of a locally convex linear
space and every point $\xi$ in $K$ is the barycenter (i.e. $\xi=\int x d\mu$ ) of a unique probability
measure $\mu$ supported on the subset  of extreme points. It is a common abuse of language, which we will adopt here, to refer to a measure $\mu \in \IRS(G)$ as an IRS.


 A problem of general interest is: ``For  `interesting' groups, describe  the smallest sub-simplex of $\IRS(G)$ containing the ergodic continuous invariant measures on $\Sub(G)$'' (where continuous means that there are no points of positive mass).  Observe that  the atomic part of  any invariant measure is concentrated on a countable union of subgroups with
finite conjugacy classes.  Such subgroups are normal or have normalizers of finite index in $G$ and
respectively play the role of fixed points or periodic points for the action of $G$ on $\Sub(G)$. So it is natural
to replace the system $(\Sub(G),G)$  by  $(\mathcal{K}(G),G)$,   where $\mathcal{K}(G) \subset \Sub(G)$  is the {\em perfect
kernel} of $\Sub(G)$ (i.e., it is the unique perfect subset whose complement is countable). Observe that every continuous invariant measure is supported on $\mathcal{K}(G)$. We denote
the simplex of invariant probability measures supported on $\mathcal{K}(G)$  by $\IRS_{\ast}(G)$.

Recall that if $X$ is a topological space then its {\em Cantor-Bendixson derivative}, denoted $X'$, is its set of limit points (the complement $X \setminus X'$ consists of all isolated points in $X$). For an ordinal $\alpha$, the iterated Cantor-Bendixson derivative $X^{\alpha}$ is defined by
$X^{\alpha+1}=(X^{\alpha})'$, and
\[X^{\lambda}=\bigcap_{\alpha<\lambda} X^{\alpha}\]
if $\lambda$ is a limit ordinal.  If  $X$  is a Polish space   then  for some  countable ordinal
$\alpha_{\star} , X^{\alpha}=X^{\alpha_{\star}}$ for all $\alpha \geq \alpha_{\star}$ and
$X^{\alpha_{\star}}$ is the perfect kernel of $X$. The least ordinal $\alpha_{\star}$ with this
property  is called the {\em Cantor-Bendixson  rank} of $X$ and is denoted by $r_{CB}(X)$. 

It follows that $\Sub(G)$   has two important characteristics: its  Cantor-Bendixson rank $r_{CB}(G)$
  and  its perfect kernel $\mathcal{K}(G)$ which can be empty or  homeomorphic to a Cantor set depending on the cardinality of $\Sub(G)$:
  $\mathcal{K}(G)=\emptyset$ if and only if $\Sub(G)$ is countable.  For
 instance, finitely generated nilpotent groups or more generally polycyclic groups as well as Baumslag-Solitar  metabelian
 groups $BS(1,n), n\in \mathbb{Z}$   have  only countably many subgroups.

By contrast, the lamplighter groups $\mathcal{L}_{n,p} = ({\mathbb{Z}/p\mathbb{Z}})^n\wr\mathbb{Z}=\oplus_\mathbb{Z}({\mathbb{Z}/p\bz})^n\rtimes\mathbb{Z}$ (were $\wr$ denotes the wreath product)
 have uncountably many subgroups.  Therefore by the result of P. Kropholler  \cite{Kro85}  any
 solvable group $G$ of infinite rank also
 has   a nonempty perfect kernel $\mathcal{K}(G)$, as it necessarily has a section isomorphic to one
 of the groups $\mathcal{L}_{1,p}$  ($p$ prime) and hence has uncountably many subgroups. Metabelian  groups of finite rank also  may have uncountably many subgroups. We will show:
 

   
 \begin{theorem}\label{thm:main1}
  The perfect kernel of $\Sub(\mathcal{L}_{n,p})$ is $\Sub(\cA_{n,p})$ where $\mathcal{A}_{n,p}$ denotes the subgroup $\oplus_\mathbb{Z}({\mathbb{Z}/p\bz})^n<\mathcal{L}_{n,p}$.  Moreover, the Cantor-Bendixson rank of $\Sub(\mathcal{L}_{n,p})$  is the first infinite ordinal.
 \end{theorem}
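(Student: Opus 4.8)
The plan is to handle the two assertions through a single structural analysis of those subgroups of $\mathcal{L}_{n,p}$ that are not contained in the base group. Write $\pi\colon \mathcal{L}_{n,p}\to\Z$ for the projection onto the top copy of $\Z$, so that $\mathcal{A}_{n,p}=\ker\pi$, and identify the shift-action with multiplication by $t$, turning the base group into the module $\mathcal{A}_{n,p}\cong R^n$ over $R:=\F_p[t,t^{-1}]$; subgroups of $\mathcal{A}_{n,p}$ are then exactly its $\F_p$-subspaces. First I would record two topological facts. The condition $H\subseteq\mathcal{A}_{n,p}$, i.e.\ $g\notin H$ for every $g$ with $\pi(g)\neq 0$, is an intersection of clopen conditions, so $\Sub(\mathcal{A}_{n,p})$ is closed and its complement $C:=\{H:\pi(H)\neq 0\}$ is open. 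Moreover $\Sub(\mathcal{A}_{n,p})$ is perfect: any subspace can be altered in coordinates lying outside a prescribed finite window, so it has no isolated points.

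\textbf{Countability and the perfect kernel.} The heart of the first statement is that $C$ is countable. Given $H\in C$ with $\pi(H)=k\Z$ ($k\geq 1$) and $N:=H\cap\mathcal{A}_{n,p}$, conjugation by any element of $H$ projecting to $k$ acts on the abelian group $\mathcal{A}_{n,p}$ as the shift by $k$, so $N$ is a module over $R_k:=\F_p[t^k,t^{-k}]$. As an $R_k$-module $\mathcal{A}_{n,p}$ is free of rank $nk$, hence Noetherian, so there are only countably many candidates for $N$; and once $k$ and $N$ are fixed, $H=\langle N,g\rangle$ is determined by the class of $g$ in $\mathcal{A}_{n,p}/N$, giving at most countably many choices. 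Summing over the countably many pairs $(k,N)$ shows $C$ is countable. Since $\Sub(\mathcal{A}_{n,p})$ is thus a closed perfect set with countable complement, it is the perfect kernel by the uniqueness noted in the introduction, proving the first claim.

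\textbf{The Cantor--Bendixson rank.} Because $C$ is open and $\Sub(\mathcal{A}_{n,p})=\mathcal{K}$ is perfect, one has $X^{\alpha}=\mathcal{K}\sqcup C^{\alpha}$ for every ordinal $\alpha$, so $r_{CB}(\Sub(\mathcal{L}_{n,p}))$ equals the Cantor--Bendixson rank of the countable scattered space $C$. I would control this by the invariant $d(H):=$ the $R_k$-corank of $N$ in $\mathcal{A}_{n,p}$, that is, the free rank of the finitely generated $R_k$-module $\mathcal{A}_{n,p}/N$, which satisfies $0\leq d(H)\leq nk<\infty$. For the upper bound $r_{CB}\leq\omega$ the key lemma is a monotonicity statement: if $H_i\to H$ in $C$ with $H_i\neq H$, then $d(H_i)<d(H)$ for all large $i$. (One first reduces to approximants whose index divides $k$: choosing $g\in H$ with $\pi(g)=k$, eventually $g\in H_i$, forcing $k\in\pi(H_i)$.) Granting the lemma, induction on $d$ shows every $H\in C$ has rank at most $d(H)$, hence finite, so $C^{\omega}=\emptyset$. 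For the lower bound I would exhibit, for every $m$, a subgroup of rank $\geq m$: the natural family is $H_k:=\langle(0,k)\rangle$, with $N=0$ and $d(H_k)=nk$. I would show $r_{CB}(H_k)=nk$ by constructing an explicit descending tree of approximations, $H_k$ being a limit of the groups $\langle N_i,(0,k)\rangle$ where $N_i$ is a rank-one $R_k$-submodule of high degree such as $(1+t^{kd_i})R_k$; such $N_i$ converge to $0$ while lowering the corank by one, and iterating realizes every value of $d$ between $0$ and $nk$ as a genuine limit point. Hence $C^{m}\neq\emptyset$ for all finite $m$, giving $r_{CB}\geq\omega$ and, with the upper bound, $r_{CB}=\omega$.

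\textbf{The main obstacle.} The crux is the monotonicity lemma together with its converse, the realizability of the approximating tree. The delicate issue is convergence across different indices: a group with $\pi=k\Z$ can be approximated by groups with $\pi=k'\Z$ for proper divisors $k'\mid k$, and one must then compare the coranks $d$ computed over the different rings $R_{k'}\supseteq R_k$. Showing that the corank nevertheless strictly increases in the limit, and that the high-degree module constructions produce honest convergence in $\Sub(\mathcal{L}_{n,p})$ rather than spreading their support densely across $\Z$ (which would destroy convergence to $0$), is where I expect the real work to concentrate.
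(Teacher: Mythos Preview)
Your proposal is correct and follows the same overall architecture as the paper: identify the perfect kernel via countability of the complement plus perfectness of $\Sub(\mathcal{A}_{n,p})$, then compute the Cantor--Bendixson rank of the open complement $C$ by attaching a numerical invariant that strictly drops along non-trivial convergent sequences and can be realized step by step.

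The one substantive difference is in the bookkeeping for the rank computation. You use the single invariant $d(H)=nk-\rk_{R_k}(N)$, the $R_k$-corank of $N=H\cap\mathcal{A}_{n,p}$ where $\pi(H)=k\Z$. The paper instead introduces the minimal shift-period $e=e(N)$, splits your invariant as a product $d(H)=t_V\cdot r_V$ with $t_V=k/e$ and $r_V=ne-\rk_e(N)$, and then passes through an abstract ``encoding poset'' $\mathcal{Q}=\N\times(\N\cup\{0\})$ with the relation $(t',r')<(t,r)\Leftrightarrow t'\mid t$ and $t'r'<tr$, proving that $C$ and $\mathcal{Q}$ have the same Cantor--Bendixson rank. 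Your monotonicity lemma is exactly the content of the paper's Lemma~6.11 (the inequality $t'r'\le tr$, strict unless the sequence stabilizes), and your descending tree for $H_k$ is the special case of the paper's realizability Lemma~6.12 obtained by decreasing $d$ by one at each stage while keeping $k$ fixed. So your route is a genuine streamlining: it dispenses with the period $e(U)$ and the poset formalism, at the cost of not recording the finer two-parameter stratification of $C$ that the paper's map $\Phi$ provides. For the bare statement $r_{CB}=\omega$ nothing is lost; the paper's version gives a bit more, namely an explicit order-theoretic model of how convergence in $C$ can occur.

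Your identification of the ``main obstacle'' is accurate and matches where the paper does the real work: comparing coranks across different rings $R_{k'}\supset R_k$ when $k'\mid k$ (handled in the paper by first reducing to the minimal period $e$ and then using $\rk_{be}(U)=b\,\rk_e(U)$), and verifying that the high-degree modules $(1+t^{kd_i})R_k$ genuinely converge to $0$ in $\Sub(\mathcal{A}_{n,p})$ (the paper does this via multiplication by distinct irreducibles, Lemma~6.7, which is a close cousin of your construction).
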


The proof relies on \cite{GK12} which gives a comprehensive study of the lattice of subgroups of $\mathcal L_{n,p}$. 

To explain the next result, recall there are two distinguished classes of simplices: a {\em Poulsen 
 simplex} is any simplex whose extreme points form a dense subset while a {\em Bauer simplex} is any simplex whose extreme points form a closed subset. By \cite{LOS78}, there is a unique Poulsen simplex up to affine isomorphism. Moreover, its set of extreme points is homeomorphic to the Hilbert space $\ell^2$. For example, the space of all shift-invariant probability measures on $\{0,1\}^\Z$ forms a Poulsen simplex. On the other hand, there are uncountably many non-isomorphic Bauer simplices. For example, let $X$ be any compact metrizable space. Then the space $P(X)$ of all Borel probability measures on $X$ is a Bauer simplex and the subspace of extreme points of $P(X)$ is homeomorphic to $X$.
 
We will show:

\begin{theorem}\label{thm:main2}
 $\IRS_\ast (({\mathbb{Z}/p\mathbb{Z}})^n\wr\mathbb{Z})$ is a Poulsen simplex. 
\end{theorem}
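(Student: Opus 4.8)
The plan is to identify $\IRS_{\ast}(\mathcal{L}_{n,p})$ with a space of shift-invariant measures and then prove that its ergodic measures are dense by a periodic-approximation argument adapted to the lattice of subspaces. By Theorem~\ref{thm:main1} the perfect kernel is $X:=\Sub(\cA_{n,p})$, so $\IRS_{\ast}(\mathcal{L}_{n,p})$ consists of all $\mathcal{L}_{n,p}$-conjugation-invariant probability measures on $X$. Since $\cA_{n,p}$ is abelian, conjugation by $\cA_{n,p}$ acts trivially on $\Sub(\cA_{n,p})$, so the action factors through $\mathcal{L}_{n,p}/\cA_{n,p}\cong\Z$, which acts by the shift $\sigma$ (multiplication by $t$ under the identification $\cA_{n,p}\cong(\F_p[t,t^{-1}])^n$). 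Hence $\IRS_{\ast}(\mathcal{L}_{n,p})$ equals the set $M_\sigma(X)$ of $\sigma$-invariant probability measures on $X$, a metrizable Choquet simplex whose extreme points are the ergodic measures. It is infinite-dimensional (there are infinitely many distinct periodic orbits), so by \cite{LOS78} it suffices to prove that the ergodic measures are dense.

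I would prove density by showing that the measures supported on a single finite $\sigma$-orbit are dense; each such orbit measure is manifestly ergodic. By the Krein--Milman theorem the finite convex combinations of ergodic measures are already dense in $M_\sigma(X)$, so it is enough to approximate an arbitrary finite combination $\sum_i\lambda_i\nu_i$ (with $\nu_i$ ergodic) by periodic-orbit measures. To control the weak\textsuperscript{*} topology I would use that $X\subset\{0,1\}^{\cA_{n,p}}$, so convergence is governed by finite windows: let $W_M\le\cA_{n,p}$ be the finite-dimensional subspace spanned by the standard generators supported on $[-M,M]\subset\Z$. The key elementary observation is that for $g\in W_M$ one has $g\in H\iff g\in H\cap W_M$, so the $W_M$-marginal of a measure is exactly the distribution of the finite-dimensional subspace $H\cap W_M\le W_M$; matching these marginals for every $M$ yields weak\textsuperscript{*} proximity.

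The construction rests on one structural fact about subspaces that replaces symbolic concatenation available for full shifts: subspaces supported on disjoint coordinate blocks combine by direct sum into a genuine subspace of $\cA_{n,p}$. Given an ergodic $\nu_i$, Birkhoff's theorem provides a $\nu_i$-typical $H_i$ whose empirical frequencies of window patterns $(\sigma^{-j}H_i)\cap W_M$ over $j\in[0,N)$ converge to the $W_M$-marginal of $\nu_i$. I would truncate $B_i:=H_i\cap W_{[0,N)}$ and tile $\Z$ periodically by blocks of length $N$, choosing a fraction $\approx\lambda_i$ of the blocks to be shifted copies of $B_i$; the direct sum over all blocks is a $\sigma$-periodic subspace $H^{\mathrm{per}}$. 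The decisive point is that for an interior position $j$ (with $\sigma^{j}W_M$ inside a single block) one has $B_i\cap\sigma^{j}W_M=H_i\cap\sigma^{j}W_M$, so the interior window patterns of $H^{\mathrm{per}}$ reproduce exactly those of the $H_i$. Consequently the orbit measure of $H^{\mathrm{per}}$ is ergodic and its $W_M$-marginal differs from that of $\sum_i\lambda_i\nu_i$ only through boundary blocks, a fraction $O(M/N)$ that vanishes as $N\to\infty$. This gives the required ergodic approximation and hence the Poulsen property.

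The main obstacle is the bookkeeping in this last step: one must verify rigorously that the interior window patterns of the tiled subspace coincide with those of the typical $H_i$, and bound the contribution of the blocks meeting the boundary of the window, for every window size $M$ simultaneously. This is exactly where the vector-space structure is indispensable—direct sums along disjoint supports and the window-locality of the membership relation $g\in H$ are what make the concatenation produce an honest subspace with the correct marginals. (From this vantage point the result is the special, most concrete case $\Gamma=\Z$ of the general phenomenon, $\Z$ being amenable and hence without property $(T)$.)
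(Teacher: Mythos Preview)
Your reduction to the shift system $(\Sub(\cA_{n,p}),\sigma)$ is exactly what the paper does, and your structural observation---that subspaces supported on disjoint coordinate blocks combine by direct sum into an honest subspace, so that interior window patterns are preserved under ``concatenation''---is precisely the mechanism the paper exploits. The difference lies in how the ergodic approximants are produced. You approximate a finite convex combination $\sum_i\lambda_i\nu_i$ (invoking Krein--Milman to reduce to this case) by the orbit measure of a \emph{periodic} subspace built from Birkhoff-typical blocks of the $\nu_i$, in the style of periodic specification in symbolic dynamics. The paper instead approximates an arbitrary $\mu$ directly: it pushes $\mu$ to a marginal on $\Sub(\cX^{[0,m-1]})$, takes the infinite product $(P^{0,m-1}_*\mu)^{\Z}$, maps back via the direct-sum map $\Phi$, and averages the $m$ translates. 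The resulting $\mu_m$ is ergodic (being a finite average of translates of a Bernoulli measure for $\tau^m$) and the estimate $\|P^{0,j}_*\mu_m-P^{0,j}_*\mu\|_1\le 2j/m$ is immediate, with no appeal to Birkhoff, Krein--Milman, or generic points. Your route yields the extra information that periodic-orbit measures are dense; the paper's route is shorter, avoids the bookkeeping you flag in your last paragraph, and handles nonergodic $\mu$ in one stroke.
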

 
 By comparison,  Stuck-Zimmer \cite{SZ94} shows that if $G$ is a lattice in a higher rank simple Lie group then $\IRS_\ast(G)$ is empty. It is an open question to describe $IRS_\ast(\textrm{SO}(n,1))$. It follows from A. Vershik's work \cite{Ve11} that if $G$ is the infinite symmetric group then $\IRS_\ast(G)$ is a Bauer simplex (moreover, its set of extreme points is explicitly parametrized). Also in \cite{Bo12} it is shown that if $G$ is a nonabelian free group then $\IRS_\ast(G)$ is Poulsen (because in this case $\cK(G)$ is the subspace of all infinite-index subgroups).

The idea of  the proof  of Theorem \ref{thm:main2} is quite simple  and is based on an analogy with the proof that for a finite alphabet  $A$  and integer $d\geq 1$,  the simplex   of  invariant measures for the
action of $\mathbb{Z}^d$  on $A^{\mathbb{Z}^d}$   by shift transformations is a Poulsen simplex.  Observe that
the proof of this fact presented in  the paper  of G. H. Olsen in \cite{Ol79}  (which is a good source of information about
the Poulsen simplex) has two mistakes, as firstly ergodicity is confused with the mixing property (bottom of
page 45) and secondly the statement  about the coincidence of distributions  of measures $\tilde{\mu}$ and
$\nu$ on cylinder sets based on a block $\Lambda(a)$ is incorrect  (very  near the end of the proof,  page 46).
The coincidence in fact holds only on blocks of much smaller size. The end of our proof of  Theorem
\ref{thm:main2}  shows how  to correct this mistake.  

If $N\lhd G$ is normal and $H<G$ is a subgroup then $H$ acts by conjugation on $\Sub(N)$ and one may study the simplex of $H$-invariant probability measures supported on $\Sub(N)$, denoted here by $\IRS_H(N)$. We consider the case $G=F\wr\Gamma=(\oplus_\Gamma F)\rtimes \Gamma$, with $N=\oplus_\Gamma F$ where $F$ is a nontrivial finite group and $\Gamma$ is a countably infinite group.
\begin{theorem}\label{thm:main3}
If $\Gamma$ does not have property (T) then $\IRS_\Gamma(N)$ is a Poulsen simplex. If $\Gamma$ has property (T) then $\IRS_\Gamma(N)$ is a Bauer simplex.
\end{theorem}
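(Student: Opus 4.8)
The plan is to prove both statements simultaneously by identifying the extreme points of $\IRS_\Gamma(N)$ with the $\Gamma$-ergodic measures on $\Sub(N)$ (ergodic decomposition for the continuous $\Gamma$-action on the compact metrizable space $\Sub(N)$), so that ``Poulsen'' means these ergodic measures are dense and ``Bauer'' means they are closed. Throughout I would use that the topology on $\Sub(N)$ is generated by the finite-window restrictions $\sigma_W(H):=H\cap\bigoplus_W F\leq F^W$ (for $W\subseteq\Gamma$ finite), so that $\mu_n\to\mu$ in $\IRS_\Gamma(N)$ iff the law of $\sigma_W$ under $\mu_n$ converges to its law under $\mu$ for every finite $W$; recall also that the relevant $\Gamma$-action on $\Sub(N)$ is the coordinate shift. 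Once the extreme set is shown closed (in the $(T)$ case) this gives Bauer by definition, and once it is shown dense (in the non-$(T)$ case) the uniqueness theorem for the Poulsen simplex \cite{LOS78}, applied to the nontrivial (in fact infinite-dimensional, since $\Gamma$ is infinite and $F$ nontrivial) metrizable Choquet simplex $\IRS_\Gamma(N)$, identifies it as the Poulsen simplex.

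For the property $(T)$ case I would argue via a uniform spectral gap. Property $(T)$ forces $\Gamma$ to be finitely generated, so fix a finite generating set $S$ with Kazhdan constant $\kappa>0$. For $\mu\in\IRS_\Gamma(N)$, ergodicity is equivalent to the absence of nonconstant invariant vectors in the Koopman representation on $L^2(\Sub(N),\mu)$, which by property $(T)$ is equivalent to the $\kappa$-uniform gap
\[
\max_{s\in S}\int \big|f\circ s^{-1}-f\big|^2\,d\mu \;\geq\; \kappa^2\Big(\int|f|^2\,d\mu-\big|\textstyle\int f\,d\mu\big|^2\Big)\qquad\text{for all }f\in C(\Sub(N)).
\]
For each fixed continuous $f$ both sides are weak$^*$-continuous in $\mu$, so this is a closed condition; intersecting over a countable dense family of $f$ shows the ergodic measures form a weak$^*$-closed set, whence $\IRS_\Gamma(N)$ is Bauer. (This is the Glasner--Weiss characterisation specialised to the action on $\Sub(N)$.)

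For the failure of $(T)$ I would show the ergodic measures are dense; by ergodic decomposition it suffices to approximate, in the window statistics above, an arbitrary finite convex combination $\sum_i c_i\mu_i$ of ergodic IRS's by a single ergodic IRS. Here I invoke the Connes--Weiss theorem: since $\Gamma$ lacks property $(T)$ it admits a weakly mixing p.m.p.\ action $(Z,\zeta)$ that is \emph{not} strongly ergodic, i.e.\ carrying asymptotically invariant sets. From these I would build a $\Gamma$-equivariant, slowly varying colouring $\chi\colon Z\to\{1,\dots,k\}^\Gamma$ in which colour $i$ has density $c_i$ and monochromatic regions are large, and then glue independent patterns $H_i$ of law $\mu_i$ over the colour classes via
\[
\Phi\big(z,(H_i)_i\big)\;=\;\sum_i\Big(H_i\cap\bigoplus_{\gamma:\,\chi_z(\gamma)=i}F\Big).
\]
The summands have disjoint supports, so this is a genuine subgroup (no spurious elements), $\Phi$ is equivariant, and its push-forward $\nu$ is an invariant IRS. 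Since the colouring is slowly varying, a fixed window $W_0$ is monochromatic of colour $i$ with probability $\approx c_i$, where $\sigma_{W_0}(\nu)$ has the law of $\sigma_{W_0}(\mu_i)$; so the window statistics of $\nu$ converge to those of $\sum_i c_i\mu_i$ as the colouring is made more invariant.

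The hard part is ergodicity of $\nu$, and this is exactly the delicate point flagged in the introduction (the block-size mismatch in Olsen's argument). Reading a single global sample $H_i$ throughout colour $i$ leaves $H_i$ as a hidden invariant and destroys ergodicity; the fix is to supply the within-colour patterns not by one global sample but by an independent Bernoulli field at an intermediate scale, so that the ambient system (the product of $Z$ with this field) is weakly mixing while the colouring still forces the correct large-scale proportions. The two demands pull against each other: independence (hence mixing, hence ergodicity) wants small uncorrelated blocks, whereas matching the law of $\sigma_{W_0}(\mu_i)$ wants $W_0$ to sit deep inside a region faithfully carrying $\mu_i$. The argument therefore hinges on choosing the block scale large compared with $W_0$ but small compared with the colouring scale; carrying out this bookkeeping and verifying that the resulting $\nu$ is simultaneously ergodic and statistically close to $\sum_i c_i\mu_i$ is where essentially all the work lies, after which density of the ergodic measures, and hence the Poulsen property, follows.
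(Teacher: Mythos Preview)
Your treatment of the Bauer case is correct and is exactly the Glasner--Weiss argument the paper invokes.

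For the Poulsen case your skeleton is right (Connes--Weiss auxiliary action, asymptotically invariant colouring, glue subgroups over colour classes), but you have misdiagnosed where the difficulty lies and are therefore proposing an unnecessary and rather delicate intermediate-scale Bernoulli construction. The paper's route is much shorter: reduce by Krein--Milman to showing that for ergodic $\mu_1,\mu_2$ the midpoint $\tfrac12(\mu_1+\mu_2)$ is a limit of ergodics; pick an \emph{ergodic joining} $\nu$ of $\mu_1,\mu_2$ on $\Sub(N)\times\Sub(N)$; take a weakly mixing, non strongly ergodic action $\Gamma\curvearrowright(X,\lambda)$ with asymptotically invariant sets $A_n$ of measure $1/2$; and glue via
\[
\Psi_n(H_1,H_2,x)=\big(H_1\cap\textstyle\bigoplus_{J_n(x)}F\big)\times\big(H_2\cap\textstyle\bigoplus_{K_n(x)}F\big),\qquad J_n(x)=\{g:x\in gA_n\},\ K_n(x)=\Gamma\setminus J_n(x).
\]
The crucial point you are missing is that ergodicity of the pushforward $\eta_n=(\Psi_n)_*(\nu\times\lambda)$ is \emph{automatic}: since $(X,\lambda)$ is weakly mixing and $\nu$ is ergodic, the product $\nu\times\lambda$ is ergodic, and $\Psi_n$ is an equivariant factor map. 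No ``hidden invariant'' arises precisely because one uses a single ergodic joining rather than independent samples or a product measure on the $H_i$'s. The asymptotic invariance of $A_n$ then gives $\eta_n\to\tfrac12(\mu_1+\mu_2)$ on finite windows exactly as you outline.

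So the ``hard part'' you flag (choosing a block scale between $W_0$ and the colouring scale, balancing independence against fidelity to $\sigma_{W_0}(\mu_i)$) simply does not arise. Your reference to the Olsen block-size issue is also misplaced here: that remark in the introduction concerns the shift-based proof of Theorem~\ref{thm:main2} over $\mathbb{Z}$, not this Glasner--Weiss style argument. Replace your intermediate-scale construction by the joining-plus-weak-mixing trick and the proof becomes a straightforward computation.
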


The proof of  Theorem \ref{thm:main3} is based on the proof of B. Weiss and E. Glasner  \cite{GW97}  that  the simplex of invariant probability measures for the shift-action of $G$ on $\{0,1\}^G$ is  a Poulsen simplex if $G$ does not have  Kazhdan's property (T)   and is a Bauer  simplex if $G$ has property (T). 

Observe that the set $\Sub(N)$ may be strictly smaller then the perfect kernel $\mathcal{K}(G)$ if $G/N$ has uncountably many subgroups. So our second result still leaves open the problem of describing the structure of the simplex $\IRS_*(F\wr\Gamma)$ of invariant measures supported on the perfect kernel.

\subsection{Plan of the paper}

In \S \ref{sec:prelim} we introduce notation and identify the perfect kernel of $\Sub(\cL_{n,p})$. In \S \ref{sec:Poulsen} we prove Theorem \ref{thm:main2} that $\IRS_\ast(\cL_{n,p})$ is a Poulsen simplex. In \S \ref{sec:Poulsen2} we prove Theorem \ref{thm:main3}. \S \ref{sec:topological} contains some general facts about IRS's. In \S \ref{sec:CB} we finish Theorem \ref{thm:main1} by computing the Cantor-Bendixson rank of $\Sub(\cL_{n,p})$. 

\section{Preliminaries}\label{sec:prelim}

In this section we set notation and identify the perfect kernel of $\Sub(\cL_{n,p})$. So fix a prime number $p$ and an integer $n\ge 1$. Let $$\mathcal{L}_{n,p}:=({\mathbb{Z}/p\bz})^n\wr\mathbb{Z}=\oplus_\mathbb{Z}({\mathbb{Z}/p\bz})^n\rtimes\mathbb{Z}.$$ Let $\mathcal{A}_{n,p}$ denote the subgroup $\oplus_\mathbb{Z}({\mathbb{Z}/p\bz})^n$ of $\mathcal{L}_{n,p}$. Let $R=\bk_p[x,x^{-1}]$, and define the structure of an $R$-module on $\Ac_{n,p}$ by setting $x\og$, $\og\in\Ac_{n,p}$ equal to the shift of $\og$ by $+1$, $(x\og)_i=\og_{i+1}$ for all $i$.

Elements of $\mathcal{L}_{n,p}$ are written as pairs $(v,s)$ where $v\in\mathcal{A}_{n,p}$ and $s\in\mathbb{Z}$. If $g=(v,s)$ and $h=(w,t)$ are elements of $\mathcal{L}_{n,p}$ then direct computation gives that $gh=(v+x^sw,s+t)$ and $g^{-1}=(-x^{-s}v,-s)$.

Let us introduce the notation $\varphi_t(x)=1+x+\dots+x^{t-1}$. The next result is \cite[Lemma 3.1]{GK12}.

\begin{lemma}\label{3_1}
\label{group}
Let $V$ be a subgroup of $\mathcal{L}_{n,p}$. Then it defines a triple $(s,V_0,v)$, where $s\in\mathbb{N}$ is such that $s\mathbb{Z}$ is the image of projection of $V$ on $\mathbb{Z}$, $V_0=V\cap\mathcal{A}_{n,p}$, satisfies $x^sV_0=V_0$, and $v\in\mathcal{A}_{n,p}$ is such that $(v,s)\in{}V$. The element $v$ is uniquely determined up to addition of elements from $V_0$. For $s=0$ one can choose $v=0$.

Conversely any triple $(s,V_0,v)$ with such properties gives rise to a subgroup of $\mathcal{L}_n$. Two triples $(s,V_0,v)$ and $(s',V'_0,v')$ define the same subgroup if and only if $s=s'$, $V_0=V'_0$ and $v+V_0=v'+V_0$.

Moreover, $V\subset{}V'$ if and only if $s'|s$, $V_0\subset{}V_0'$ and $v=\varphi_{s/s'}(x^{s'})v'\mod{}V_0'$.
\end{lemma}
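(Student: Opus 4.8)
The plan is to work entirely inside the semidirect-product structure, using only the product rule $gh=(v+x^sw,s+t)$, the inversion rule $g^{-1}=(-x^{-s}v,-s)$, and the projection homomorphism $\pi\colon\mathcal{L}_{n,p}\to\mathbb{Z}$, $\pi(v,s)=s$. First I would associate the triple to a given subgroup $V$. Since $\pi$ is a homomorphism, $\pi(V)$ is a subgroup of $\mathbb{Z}$, hence equals $s\mathbb{Z}$ for a unique $s\ge 0$; this defines $s$. Setting $V_0:=V\cap\mathcal{A}_{n,p}=\ker(\pi|_V)$ gives a subgroup of $\mathcal{A}_{n,p}$, and I choose any $v$ with $(v,s)\in V$ (taking $v=0$ when $s=0$). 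The one non-formal point is $x^sV_0=V_0$: for $w\in V_0$ a direct computation gives the conjugate $(v,s)(w,0)(v,s)^{-1}=(x^sw,0)$, so $x^sV_0\subseteq V_0$, and conjugating by $(v,s)^{-1}$ yields the reverse inclusion. Uniqueness of $v$ modulo $V_0$ is then immediate, since $(v,s),(v',s)\in V$ forces $(v,s)(v',s)^{-1}=(v-v',0)\in V_0$.

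Second, for the converse I would realize a triple with the stated properties as $V:=\langle(v,s)\rangle\,A_0$, where $A_0=\{(w,0):w\in V_0\}$. The conjugation identity above shows $(v,s)$ normalizes $A_0$ (this is exactly where $x^sV_0=V_0$ is used), so the product $\langle(v,s)\rangle A_0$ is genuinely a subgroup. The key computational input is the power formula $(v,s)^k=(\varphi_k(x^s)v,ks)$, proved by a one-line induction from the additivity $\varphi_k(x^s)+x^{ks}=\varphi_{k+1}(x^s)$. It gives the explicit description $V=\{(\varphi_k(x^s)v+w,ks):k\in\mathbb{Z},\ w\in V_0\}$, from which $\pi(V)=s\mathbb{Z}$, $V\cap\mathcal{A}_{n,p}=V_0$ and $(v,s)\in V$ can all be read off directly, so $V$ has the prescribed triple.

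Third, the equality and inclusion criteria follow by matching these invariants. For equality, $s=s'$ and $V_0=V_0'$ because they are the image and kernel of $\pi$, and $v+V_0=v'+V_0$ is the uniqueness statement already proved; conversely $v'=v+w$ with $w\in V_0$ gives $(v',s)=(w,0)(v,s)\in V$, whence $V=V'$. For $V\subseteq V'$, the projections force $s'\mid s$ and the intersections force $V_0\subseteq V_0'$, while expressing $(v,s)\in V'$ through the power formula with exponent $k=s/s'$ gives $(v,s)=(\varphi_{s/s'}(x^{s'})v'+w',s)$ for some $w'\in V_0'$, i.e. $v\equiv\varphi_{s/s'}(x^{s'})v'\pmod{V_0'}$. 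The converse simply reverses these steps, noting $A_0\subseteq A_0'\subseteq V'$ and $(v,s)=(v',s')^{s/s'}\cdot(\text{element of }A_0')\in V'$.

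There is no deep obstacle here; the statement is structural and reduces to careful bookkeeping of the twist by powers of $x$. The point requiring the most care is treating $x^sV_0=V_0$ as an honest equality of sets rather than a mere inclusion, since this is what makes $A_0$ normalized and, via $x^{ks}V_0=V_0$ for all $k$, what makes $V$ closed under products and inverses. The additivity of $\varphi_k(x^s)$ is the only arithmetic of the wreath product that must be handled explicitly; once it is in place, every remaining assertion is a direct substitution.
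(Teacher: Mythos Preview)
Your argument is correct. Note, however, that the paper does not give its own proof of this lemma: it is quoted verbatim as \cite[Lemma 3.1]{GK12} and used as a black box. So there is no ``paper's proof'' to compare against here; you have supplied a clean self-contained argument where the authors chose to cite one.

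Your approach is exactly the standard one for subgroups of a split extension $A\rtimes\mathbb{Z}$: read off $s$ from the image under the projection, $V_0$ from the kernel, and a coset representative $(v,s)$; the conjugation identity $(v,s)(w,0)(v,s)^{-1}=(x^sw,0)$ gives $x^sV_0=V_0$, and the power formula $(v,s)^k=(\varphi_k(x^s)v,ks)$ does the rest of the bookkeeping. The computations you indicate all check. One small point worth making explicit in a written version: the power formula needs to hold for \emph{all} $k\in\mathbb{Z}$, not just $k\ge 0$, so when you write ``one-line induction'' you should either extend $\varphi_k$ to negative $k$ via the recursion $\varphi_{k+1}(y)=\varphi_k(y)+y^{k}$ (so $\varphi_{-k}(y)=-y^{-1}-\cdots-y^{-k}$) or note that inverses are handled by the closure of $V$ under $(v,s)^{-1}$. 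This is routine, but it is the place where a careless write-up could leave a gap.
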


Define $\pi_1: \Sub(\cL_{n,p}) \to \N$ by $\pi_1(V) = s$ where  $s\mathbb{Z}$ is the image of projection of $V$ on $\mathbb{Z}$. Define $\pi_2: \Sub(\cL_{n,p}) \to \Sub(\cA_{n,p})$ by $\pi_2(V)=V\cap \cA_{n,p}$.

\begin{lemma}\label{lemma:s}
There are only a countable number of subgroups $V$ of $\Lc_{n,p}$ with $\pi_1(V)>0$.
\end{lemma}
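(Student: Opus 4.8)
The plan is to invoke the parametrization from Lemma~\ref{3_1} and count. By that lemma, a subgroup $V$ with $\pi_1(V)=s$ is determined exactly by the data $(s,V_0,v+V_0)$, where $V_0=V\cap\cA_{n,p}$ satisfies $x^sV_0=V_0$ and $v+V_0$ is a coset of $V_0$ in $\cA_{n,p}$. Since $\{V:\pi_1(V)>0\}=\bigcup_{s\ge1}\{V:\pi_1(V)=s\}$ is a countable union, I would reduce to showing that for each fixed $s\ge1$ there are only countably many admissible triples.

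First I would reinterpret the constraint $x^sV_0=V_0$: it says precisely that $V_0$ is stable under multiplication by $x^s$ and $x^{-s}$, i.e. $V_0$ is a module over the subring $R_s:=\bk_p[x^s,x^{-s}]\subset R$. The point is that $R_s\cong\bk_p[y,y^{-1}]$ (set $y=x^s$) is a principal ideal domain, hence Noetherian, and $R$ is free of rank $s$ over $R_s$ with basis $1,x,\dots,x^{s-1}$. As $\cA_{n,p}\cong R^n$ as an $R$-module, it is therefore a finitely generated (free of rank $ns$) $R_s$-module, and in particular a Noetherian $R_s$-module. Consequently every admissible $V_0$ is a finitely generated $R_s$-submodule.

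From here the counting is routine. Since $\cA_{n,p}$ is countable, there are only countably many finite tuples of its elements, and each $V_0$ is the $R_s$-span of one such tuple; so there are countably many possibilities for $V_0$. For each fixed $V_0$, the quotient $\cA_{n,p}/V_0$ is countable, being a quotient of a countable group, so $v+V_0$ ranges over a countable set. Hence for each $s\ge1$ there are at most countably many triples, and taking the countable union over $s\ge1$ finishes the proof.

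The one genuinely nontrivial step is the middle paragraph: a priori $V_0$ is just a subgroup of the infinite-dimensional $\bk_p$-vector space $\cA_{n,p}$, of which there are uncountably many, so everything hinges on the observation that the invariance $x^sV_0=V_0$ promotes $V_0$ to a finitely generated module over a Noetherian ring. I expect this to be the main obstacle to get right; the surrounding bookkeeping (cosets, countable unions) is then immediate.
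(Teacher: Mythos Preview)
Your proposal is correct and follows essentially the same argument as the paper. The only cosmetic difference is that where you view $V_0$ as a module over the subring $R_s=\bk_p[x^s,x^{-s}]\subset R$, the paper instead redefines the $R$-action on $\cA_{n,p}$ by $x*\omega=x^s\omega$ (yielding an $R$-module isomorphic to $R^{ns}$); these are equivalent formulations of the same Noetherianity-over-a-PID step, and the ensuing counting of finitely generated submodules and cosets is identical.
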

\begin{proof}
Let $V$ be such a subgroup, $s=\pi_1(V)$, $V_0=V\cap\Ac_{n,p}$, and $v\in\Ac_{n,p}$ be such that $(v,s)\in V$. The triple $(s,V_0,v)$ completely characterizes the group. Define a new $R$-module structure $N$ on $\Ac_{n,p}$ by $x*\og=x^s\og$. Then $V_0$ is an $R$-submodule of $N$. Note that since $s>0$, $N$ is isomorphic to $R^{ns}$ as an $R$-module (if $e_i$, $i\in[1,n]$ is the basis of $\Ac_{n,p}$ then the basis of $N$ is  $x^j e_i$, $i\in[1,n]$ and $j\in[0,s-1]$).

Note that there is only a countable number of submodules of $R^k$. Indeed, since $R$ is a PID and $R^k$ is finitely generated, any submodule is also finitely generated. Since $R$ is countable, $R^k$ is also countable, hence there is only a countable number of finite subsets of $R^k$. Hence there is only a countable number of finitely generated submodules.

Since $\Ac_{n,p}$ is countable,  we have that for each $s>0$ there is only a countable number of possible triples $(s,V_0,v)$.
\end{proof}


\begin{theorem}\label{perfect}
The perfect kernel of $\Sub(\cL_{n,p})$ is $\Sub(\cA_{n,p})$.
\end{theorem}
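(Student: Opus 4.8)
The plan is to establish three facts about $\Sub(\mathcal{A}_{n,p})$ viewed inside $\Sub(\mathcal{L}_{n,p})$: it is closed, its complement is countable, and it has no isolated points. Since $\Sub(\mathcal{L}_{n,p})$ is a compact metrizable (hence Polish) space and the perfect kernel is the unique perfect subset with countable complement, these three facts identify $\Sub(\mathcal{A}_{n,p})$ as the perfect kernel.

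The first two facts are immediate. A subgroup $V$ belongs to $\Sub(\mathcal{A}_{n,p})$ exactly when $g\notin V$ for every $g\in\mathcal{L}_{n,p}\setminus\mathcal{A}_{n,p}$; each condition $\{V:g\notin V\}$ is clopen in $\Sub(\mathcal{L}_{n,p})$, so $\Sub(\mathcal{A}_{n,p})=\bigcap_{g\notin\mathcal{A}_{n,p}}\{V:g\notin V\}$ is closed. By Lemma~\ref{3_1} a subgroup lies in $\mathcal{A}_{n,p}$ iff $\pi_1(V)=0$, so the complement of $\Sub(\mathcal{A}_{n,p})$ is $\{V:\pi_1(V)>0\}$, which is countable by Lemma~\ref{lemma:s}.

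The crux is showing $\Sub(\mathcal{A}_{n,p})$ has no isolated points. Because $\mathcal{A}_{n,p}$ is an elementary abelian $p$-group, its subgroups are precisely the $\mathbb{F}_p$-subspaces of the infinite-dimensional space $\mathcal{A}_{n,p}=\oplus_{\mathbb{Z}}(\mathbb{Z}/p\mathbb{Z})^n$; writing $\mathcal{A}^{(k)}=\oplus_{|i|\le k}(\mathbb{Z}/p\mathbb{Z})^n$ for the finite subgroups exhausting $\mathcal{A}_{n,p}$, the clopen sets $\{W':W'\cap\mathcal{A}^{(k)}=W\cap\mathcal{A}^{(k)}\}$ form a neighbourhood basis of a subspace $W$. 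It therefore suffices to produce, for each $W$ and each $k$, a subspace $W'\ne W$ with $W'\cap\mathcal{A}^{(k)}=W\cap\mathcal{A}^{(k)}$; letting $k\to\infty$ then gives a sequence of such $W'$ converging to $W$. I would argue by cases according to the codimension of $W$. If $W$ has infinite codimension then $W+\mathcal{A}^{(k)}\ne\mathcal{A}_{n,p}$ omits infinitely many basis vectors, so some basis vector $f_m$ with $|m|>k$ satisfies $f_m\notin W+\mathcal{A}^{(k)}$; put $W'=W+\langle f_m\rangle$. Then $W'\ne W$, and $(W+\langle f_m\rangle)\cap\mathcal{A}^{(k)}=W\cap\mathcal{A}^{(k)}$, since any $v=w+cf_m\in\mathcal{A}^{(k)}$ with $c\ne0$ would force $f_m\in W+\mathcal{A}^{(k)}$. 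If $W$ has finite codimension $c\ge1$, write $W=\bigcap_{j=1}^c\ker\phi_j$ with $\phi_1,\dots,\phi_c$ linearly independent functionals, and replace $\phi_1$ by $\phi_1+f_m^{\ast}$, where $f_m^{\ast}$ is the coordinate functional of a basis vector $f_m$ with $|m|>k$. Since $f_m^{\ast}$ vanishes on $\mathcal{A}^{(k)}$, the new subspace $W'$ still meets $\mathcal{A}^{(k)}$ in $W\cap\mathcal{A}^{(k)}$; and $W'=W$ holds only when $f_m^{\ast}$ is proportional to $\phi_1$ on $Y:=\bigcap_{j\ge2}\ker\phi_j$, which can occur for only finitely many $m$, since otherwise infinitely many distinct coordinate functionals would coincide on $Y$, forcing the finite-codimension space $Y$ to have infinite codimension. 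The remaining case $W=\mathcal{A}_{n,p}$ is handled by deleting a single far basis vector.

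Finally I would assemble the conclusion. Let $\mathcal{K}$ be the perfect kernel of $\Sub(\mathcal{L}_{n,p})$, i.e. its largest perfect subset. As $\Sub(\mathcal{A}_{n,p})$ is perfect, $\Sub(\mathcal{A}_{n,p})\subseteq\mathcal{K}$. Conversely $\mathcal{K}\setminus\Sub(\mathcal{A}_{n,p})$ is relatively open in the perfect Polish space $\mathcal{K}$ yet contained in the countable set $\Sub(\mathcal{L}_{n,p})\setminus\Sub(\mathcal{A}_{n,p})$; a nonempty relatively open subset of a perfect Polish space is uncountable, so this set is empty and $\mathcal{K}=\Sub(\mathcal{A}_{n,p})$. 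I expect the genuine difficulty to be concentrated in the no-isolated-points step, and specifically in the finite-codimension case, where adjoining a far vector is unavailable and one must instead perturb a defining functional and verify that the subspace actually changes.
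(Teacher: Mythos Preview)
Your proof is correct and follows the same three-step skeleton as the paper: $\Sub(\mathcal{A}_{n,p})$ is closed, its complement is countable (via Lemma~\ref{lemma:s}), and it has no isolated points; the conclusion then follows from the characterization of the perfect kernel, exactly as you argue.

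The only substantive difference is in the no-isolated-points step, where your case analysis on codimension is considerably more work than needed. The paper handles all cases at once: given $H\le\mathcal{A}_{n,p}$, extend an $\mathbb{F}_p$-basis of $H$ to a basis $\{f_i\}_{i\in I}$ of $\mathcal{A}_{n,p}$, so that $I=I_1\sqcup I_2$ with $\{f_i:i\in I_1\}$ a basis of $H$. Since $I$ is infinite, at least one of $I_1,I_2$ is infinite; for each $j$ in that infinite piece, let $H_j$ be the span of $\{f_i:i\in I_1\,\Delta\,\{j\}\}$. Then $H_j\ne H$ for every $j$, and one checks directly that $H$ is an accumulation point of $\{H_j\}$. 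This single construction subsumes your three cases (the $I_2$-infinite branch covers your infinite-codimension case, the $I_1$-infinite branch covers the codimension-zero and finite-positive-codimension cases) and avoids the functional-perturbation argument, whose verification that $W'\ne W$ you correctly identify as the delicate point. Your version is valid, but the basis-toggle is cleaner.
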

\begin{proof}
First we show that $\Sub(\cA_{n,p})$ does not have isolated points. Let $H$ be a subgroup of $\cA_{n,p}$ and consider it as a vector subspace over $\mathbb{F}_p$. There is a countable basis $f_i$ of $\cA_{n,p}$, indexed by a set $I$, and a partition $I=I_1\coprod I_2$ such that $\{f_i:~i\in I_1\}$ is a basis of $H$. At least one of the sets $I_1$ or $I_2$ is infinite. If $I_q$ ($q=1$ or $2$) is infinite, then the collection of subgroups $H_j$, $j\in I_q$, with $H_j$ being the subgroup generated by $\{f_i| i\in I_1\Delta\{j\}\}$ (by $\Delta$ we mean symmetric difference of two sets), does not contain $H$ but has $H$ as an accumulation point.
Now we have that $\Sub(\cA_{n,p})$ is a perfect subset of $\Sub(\cL_{n,p})$, hence the perfect kernel of $\Sub(\cL_{n,p})$ contains $\Sub(\cA_{n,p})$. By the previous Lemma we have that $\Sub(\cL_{n,p})\setminus \Sub(\cA_{n,p})$ is countable. The next Lemma then shows that perfect kernel must be equal to $\Sub(\cA_{n,p})$.
\end{proof}
\begin{lemma}
Let $X$ be a Cantor set and $X_1\subset X$ be a closed subset without isolated points such that $X\setminus X_1$ is at most countable. Then in fact $X=X_1$.
\end{lemma}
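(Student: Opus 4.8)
The plan is to exploit the fact that $X\setminus X_1$ is \emph{open} in $X$, since $X_1$ is closed, together with the fact that $X$ is \emph{perfect} (a Cantor set has no isolated points). I would then reduce the statement to the following elementary observation: a nonempty open subset of a Cantor set is uncountable. Granting this, the countable set $X\setminus X_1$ must be empty, whence $X=X_1$.

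To prove the observation, let $U\subseteq X$ be a nonempty open subset. Since $X$ is homeomorphic to $\{0,1\}^{\mathbb{N}}$ (or, abstractly, is compact, metrizable, perfect, and totally disconnected), the clopen cylinder sets form a basis, so $U$ contains a nonempty clopen subset $C$. Then $C$ is compact, metrizable, and totally disconnected, and it has no isolated points: an isolated point of $C$ would be isolated in $X$ because $C$ is open, contradicting that $X$ is perfect. Hence $C$ is itself a Cantor set and in particular has the cardinality of the continuum, so $U\supseteq C$ is uncountable.

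Alternatively --- and this is the version I would probably record, as it avoids invoking the topological characterization of the Cantor set --- one can argue by Baire category. The set $U=X\setminus X_1$ is open in the compact metric space $X$, hence locally compact Hausdorff and therefore a Baire space. If $U$ were nonempty and countable, then writing $U=\bigcup_{x\in U}\{x\}$ exhibits $U$ as a countable union of singletons; by the Baire property at least one singleton $\{x\}$ would fail to be nowhere dense in $U$, i.e.\ would be open in $U$, making $x$ isolated in $U$ and hence, since $U$ is open in $X$, isolated in $X$. This contradicts the perfectness of $X$.

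In either approach the argument gives $X\setminus X_1=\emptyset$. There is no genuine obstacle here; the only point requiring care is the observation that nonempty open subsets of a perfect space are uncountable. I would also note in passing that the hypothesis ``$X_1$ has no isolated points'' is not actually used in this conclusion: all that is needed is that $X$ is perfect (a Cantor set), that $X_1$ is closed, and that the complement $X\setminus X_1$ is countable.
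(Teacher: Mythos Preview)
Your argument is correct and matches the paper's proof essentially verbatim: the paper also observes that $U=X\setminus X_1$ is open and that any nonempty open subset of the Cantor set contains a copy of the Cantor set and is therefore uncountable. Your additional Baire-category variant and the remark that the hypothesis ``$X_1$ has no isolated points'' is unused are accurate extras, but the core approach is the same.
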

\begin{proof}
Indeed the set $U=X\setminus X_1$ is open, and any nonempty open subset of the Cantor set contains a small copy of the Cantor set and thus is uncountable.
\end{proof}
We skip the proof of the next lemma which is straightforward.
\begin{lemma}\label{lemma:pi}
The map $\pi_2$ is continuous.  The map $\pi_1$ is Borel and conjugation-invariant in the sense that $\pi_1(gVg^{-1})=\pi_1(V)$ for any $g\in \cL_{n,p}, V \in \Sub(\cL_{n,p})$.
\end{lemma}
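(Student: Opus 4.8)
The plan is to prove the two assertions of Lemma~\ref{lemma:pi} separately, since the continuity of $\pi_2$ and the Borel/invariance properties of $\pi_1$ have rather different characters. Throughout I work with the topology on $\Sub(\cL_{n,p})$ induced from $\{0,1\}^{\cL_{n,p}}$, so that a basic neighborhood of a subgroup is determined by specifying membership or non-membership for finitely many fixed group elements.

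First I would establish continuity of $\pi_2$. Recall that $\pi_2(V) = V \cap \cA_{n,p}$, viewed as a point of $\Sub(\cA_{n,p}) \subset \{0,1\}^{\cA_{n,p}}$. To check continuity it suffices to verify that for each fixed $a \in \cA_{n,p}$, the coordinate function $V \mapsto [a \in \pi_2(V)]$ is continuous. But $a \in \pi_2(V)$ if and only if $a \in V$ (since $a \in \cA_{n,p}$ automatically), and this is exactly the $a$-coordinate of $V$ in $\{0,1\}^{\cL_{n,p}}$, which is continuous by definition of the product topology. Hence $\pi_2$ is continuous because each of its output coordinates is a continuous (indeed locally constant) function of the input.

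Next I would treat $\pi_1$. For the conjugation-invariance, the cleanest route is the intrinsic description of $s = \pi_1(V)$ from Lemma~\ref{3_1}: $s\Z$ is the image of the projection of $V$ onto the $\Z$-factor. Since conjugation by $g = (w,t)$ sends $(v,r) \mapsto (w + x^t v - x^r w,\, r)$, it preserves the $\Z$-coordinate $r$ of every element, so $gVg^{-1}$ has the same projection onto $\Z$ as $V$; therefore $\pi_1(gVg^{-1}) = \pi_1(V)$. For the Borel measurability of $\pi_1 \colon \Sub(\cL_{n,p}) \to \N$, I would show each fiber is Borel. The event $\{\pi_1(V) \le m\}$, i.e.\ the projection of $V$ meets $\{1,\dots,m\}$, equals $\bigcup_{1 \le r \le m} \{V \text{ contains some element with } \Z\text{-coordinate } r\}$; and the condition ``$V$ contains an element of $\Z$-coordinate $r$'' is $\bigcup_{v \in \cA_{n,p}} \{(v,r) \in V\}$, a countable union of basic clopen sets since $\cA_{n,p}$ is countable. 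Thus $\{\pi_1 \le m\}$ is an $F_\sigma$ set, each level set $\{\pi_1 = s\}$ is Borel, and $\pi_1$ is Borel measurable.

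I do not expect a serious obstacle here, which is consistent with the paper calling the proof straightforward; the only point requiring a little care is remembering that $\pi_1$ cannot be continuous (a subgroup with $\pi_1 = 0$ can be approximated by subgroups with large positive $s$), which is exactly why one proves only the weaker Borel property. The mild subtlety is organizing the measurability argument as a countable union over the countable group $\cA_{n,p}$ so that one stays within the Borel $\sigma$-algebra; once $\cA_{n,p}$ is invoked as countable, everything reduces to countable Boolean combinations of the clopen coordinate sets.
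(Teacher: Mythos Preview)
Your proof is correct. The paper itself omits the proof entirely, calling it straightforward, so there is no argument to compare against; your verification is exactly the routine check the authors had in mind (and the continuity of $\pi_2$ is later subsumed by Proposition~\ref{prop5.3}). One minor wording slip: the set you describe as ``$\{\pi_1\le m\}$'' is actually $\{1\le\pi_1\le m\}$, since when $\pi_1(V)=0$ the projection of $V$ is $\{0\}$ and does not meet $\{1,\dots,m\}$; this is harmless, as $\{\pi_1=0\}$ is then the complement of $\bigcup_{m\ge 1}\{1\le\pi_1\le m\}$ and hence Borel, so all level sets are Borel as you conclude.
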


\section{Conjugation-invariant measures}\label{sec:Poulsen}

In this section, we show that $\IRS_\ast(\cL_{n,p})$ is a Poulsen simplex thereby establishing Theorem \ref{thm:main2}. 

We use the superscript $e$ on a space of measures to denote the subspace of ergodic measures. For example, $\IRS^e(G)$ denotes the subspace of ergodic measures of $\IRS(G)$. Let $\IRS_s(\cL_{n,p}) \subset \IRS(\cL_{n,p})$ be those measures $\mu$ with $\mu( \pi_1^{-1}(s)) = 1$. By Lemma \ref{lemma:pi}, $\IRS^e(\cL_{n,p})$ is the disjoint union of $\IRS_s^e(\cL_{n,p})$ over $s \in \{0,1,2,3,\ldots\}$. By Theorem \ref{perfect}, $\IRS_0(\cL_{n,p})=\IRS_*(\cL_{n,p})$. 

\begin{lemma}
If $s>0$ and $\mu \in \IRS_s^e(\cL_{n,p})$ then $\mu$ is supported on a single finite conjugacy class. Therefore $\cup_{s=1}^\infty \IRS_s^e(\cL_{n,p})$ is countable.
\end{lemma}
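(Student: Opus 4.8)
The plan is to show that an ergodic measure $\mu \in \IRS_s^e(\cL_{n,p})$ with $s > 0$ must be concentrated on finitely many subgroups, and then identify this finite set as a single conjugacy class. The first observation I would make is that $\pi_2(V) = V_0 = V \cap \cA_{n,p}$ is a conjugation-invariant quantity up to the action: more precisely, I want to understand how conjugation by the generators of $\cL_{n,p}$ moves the triple $(s, V_0, v)$ associated to $V$ via Lemma \ref{3_1}. Since $s = \pi_1(V)$ is itself conjugation-invariant by Lemma \ref{lemma:pi}, the entire $\IRS_s$ analysis happens at a fixed $s$.

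**First I would** analyze the $V_0$ coordinate. The key structural fact is that $V_0$ is an $R$-submodule of the module $N$ (with $x * \omega = x^s \omega$) as established in the proof of Lemma \ref{lemma:s}, and conjugation by an element $(w,t) \in \cL_{n,p}$ sends $V_0 = V \cap \cA_{n,p}$ to $x^t V_0$ (conjugating the abelian part just shifts it). Because $x^s V_0 = V_0$, the orbit of $V_0$ under the full shift $x^{\Z}$ consists of at most the $s$ translates $V_0, xV_0, \dots, x^{s-1} V_0$, hence is \emph{finite}. Therefore the function $V \mapsto V_0$ takes only finitely many values along any conjugacy orbit, and by ergodicity $\mu$ must be supported on subgroups sharing a single such finite $x^{\Z}$-orbit of $V_0$'s. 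I expect this to be the cleanest step.

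**The main obstacle** I anticipate is controlling the third coordinate $v$, which lives in the quotient $\cA_{n,p}/V_0$ and is only defined modulo $V_0$. I would argue as follows: having fixed $s$ and (up to the finite orbit) the submodule $V_0$, Lemma \ref{3_1} tells us a subgroup is determined by the coset $v + V_0 \in \cA_{n,p}/V_0$. The point is that the quotient $\cA_{n,p}/V_0$, as an $R$-module with $x$ acting, must be \emph{finite} whenever the orbit-closure data force $V_0$ to have finite index in the relevant sense — or, failing that, I would show directly that conjugation acts on the coset $v + V_0$ through a finite group, so that the conjugacy class of $V$ is finite. Concretely, conjugating $(v,s)$ by $(w,t)$ and reducing mod $V_0$ produces an affine map of $v+V_0$ whose orbit I need to bound. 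The delicate verification is that for $s > 0$ this orbit is finite; I would do this by exhibiting that the stabilizer of $V$ (equivalently its normalizer) has finite index, using that $x^s V_0 = V_0$ pins down $v$ up to finitely many choices after projecting to the finite-dimensional pieces of the module structure.

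**Finally**, once the conjugacy class of $\mu$-almost-every $V$ is shown to be finite, ergodicity of $\mu$ forces $\mu$ to be the uniform measure on a single such finite conjugacy class (an ergodic invariant measure on a finite orbit is the equidistribution on that orbit). For the countability statement, I would combine this with Lemma \ref{lemma:s}: there are only countably many subgroups $V$ with $\pi_1(V) > 0$, hence only countably many finite conjugacy classes of such subgroups, hence only countably many ergodic measures in $\cup_{s=1}^\infty \IRS_s^e(\cL_{n,p})$. The whole argument thus reduces, modulo the finite-orbit bound on $v$, to the already-proven counting lemma.
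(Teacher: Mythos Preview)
Your ``main obstacle'' is a genuine one, and in fact the resolution you sketch is false: not every conjugacy class of subgroups with $\pi_1(V)=s>0$ is finite. Take $n=1$, any prime $p$, $s=1$, and let $V$ be the infinite cyclic subgroup generated by $(v,1)$ for some $v\in R=\F_p[x,x^{-1}]$. Then $V_0=V\cap\cA_{1,p}=0$, so by Lemma~\ref{3_1} the subgroup is determined by $v$ itself. Conjugating by $(w,0)\in\cA_{1,p}$ sends $(v,1)$ to $(v+(1-x)w,1)$, so the conjugacy class of $V$ is parametrized by the coset $v+(1-x)R$, which is infinite. Thus your plan to bound the orbit of $v+V_0$ by exhibiting a finite-index normalizer cannot succeed in general; there are subgroups with $s>0$ whose conjugacy class is infinite, and these simply do not carry any invariant \emph{probability} measure.

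The paper's proof sidesteps this entirely by reversing the logic. You already invoke Lemma~\ref{lemma:s} at the very end for the countability statement; the point is that it does all the work at the start. Since $\pi_1^{-1}(\{s\})$ is countable for $s>0$, the ergodic probability measure $\mu$ is supported on a countable set and hence has an atom. By ergodicity the orbit of that atom has full measure, and by invariance every point of the orbit has the same positive mass, forcing the orbit to be finite. So finiteness of the conjugacy class is a \emph{consequence} of $\mu$ being an ergodic probability measure on a countable set, not a structural fact about all conjugacy classes. Your detailed analysis of how conjugation moves $(s,V_0,v)$ is correct as far as it goes (the $V_0$-orbit really is finite), but it is not needed, and the attempt to finish it on the $v$-coordinate runs into the counterexample above.
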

\begin{proof}
By Lemma \ref{lemma:s}, $\mu$ is supported on a countable set. Because $\mu$ is ergodic, this implies it is supported on single conjugacy class $\cC \subset \Sub(\cL_{n,p})$. Therefore it is the uniform measure on $\cC$ which implies that $\cC$ is finite.
\end{proof}

Let $\IRS_\tau(\cA_{n,p})$ be the space of shift-invariant Borel probability measures on $\Sub(\cA_{n,p})$. Here $\tau$ is the shift on $\cA_{n,p}$ which corresponds to multiplication by $x$ if $\cA_{n,p}$ is considered as $R$-module. 

\begin{lemma}
The map $\pi_2$ pushes forward to an affine map $(\pi_2)_*:\IRS(\cL_{n,p}) \to \IRS_\tau(\cA_{n,p})$. The restriction of $(\pi_2)_*$ to $\IRS_0(\cL_{n,p})$ is an isomorphism.
\end{lemma}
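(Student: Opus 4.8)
The plan is to show that $\pi_2$ intertwines the two group actions and then to observe that over the fiber $\pi_1^{-1}(0)$ the map $\pi_2$ is simply the identity. Since $\pi_2$ is continuous (Lemma \ref{lemma:pi}) it is Borel, so the pushforward $(\pi_2)_*$ is a well-defined map on Borel probability measures, and pushforward is tautologically affine (indeed linear on signed measures). Thus the only substantive point in the first assertion is that $(\pi_2)_*$ carries a conjugation-invariant measure to a $\tau$-invariant one.

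For this I would first record that $\cA_{n,p}$ is normal in $\cL_{n,p}$, so conjugation by any $g=(w,t)$ preserves $\cA_{n,p}$ and commutes with intersection: $\pi_2(gVg^{-1}) = (gVg^{-1})\cap\cA_{n,p} = g(V\cap\cA_{n,p})g^{-1} = g\,\pi_2(V)\,g^{-1}$. Using the multiplication and inversion rules from the start of the section, a direct computation gives $(w,t)(u,0)(w,t)^{-1} = (x^t u,0)$ for $u\in\cA_{n,p}$; that is, the conjugation action of $\cL_{n,p}$ on $\Sub(\cA_{n,p})$ factors through the projection $\cL_{n,p}\to\mathbb{Z}$, with the generator acting as the shift $\tau$. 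Consequently $\pi_2$ intertwines the conjugation action on $\Sub(\cL_{n,p})$ with the $\tau$-action on $\Sub(\cA_{n,p})$, and invariance of $\mu$ under all conjugations forces $(\pi_2)_*\mu$ to be invariant under every $\tau^t$, hence under $\tau$. This places $(\pi_2)_*\mu$ in $\IRS_\tau(\cA_{n,p})$.

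For the isomorphism statement I would use that $\pi_1^{-1}(0)=\Sub(\cA_{n,p})$ (a subgroup projects trivially to $\mathbb{Z}$ precisely when it lies in $\cA_{n,p}$), and that on this set $\pi_2$ is the identity, since $V\subseteq\cA_{n,p}$ gives $V\cap\cA_{n,p}=V$. Thus $\pi_2$ is a continuous retraction of $\Sub(\cL_{n,p})$ onto $\Sub(\cA_{n,p})$. A measure $\mu\in\IRS_0(\cL_{n,p})$ is by definition supported on $\Sub(\cA_{n,p})$, so $(\pi_2)_*\mu=\mu$; conversely any $\nu\in\IRS_\tau(\cA_{n,p})$, viewed as a measure on $\Sub(\cL_{n,p})$ supported on $\pi_1^{-1}(0)$, is conjugation-invariant by the same intertwining and hence lies in $\IRS_0(\cL_{n,p})$ with $(\pi_2)_*\nu=\nu$. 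So the restriction of $(\pi_2)_*$ to $\IRS_0(\cL_{n,p})$ is an affine bijection onto $\IRS_\tau(\cA_{n,p})$; since it is realized by pushforward along the continuous maps $\pi_2$ and the continuous inclusion $\Sub(\cA_{n,p})\hookrightarrow\Sub(\cL_{n,p})$ in the two directions, it is a weak*-homeomorphism, i.e. an isomorphism of simplices.

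I do not expect a genuine obstacle here: the entire content is the one-line conjugation computation $(w,t)(u,0)(w,t)^{-1}=(x^tu,0)$ together with the retraction property of $\pi_2$. The only point demanding a little care is that, for measures supported on $\Sub(\cA_{n,p})$, invariance under the full conjugation action is equivalent to invariance under $\tau$ alone; this holds because the image of $\cL_{n,p}$ in $\Aut(\cA_{n,p})$ under conjugation is exactly the cyclic group generated by the invertible map $\tau$.
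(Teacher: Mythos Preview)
Your proposal is correct. The paper states this lemma without proof, treating it as routine; your argument supplies exactly the expected details---the conjugation computation $(w,t)(u,0)(w,t)^{-1}=(x^{t}u,0)$ showing that the $\cL_{n,p}$-action on $\Sub(\cA_{n,p})$ factors through powers of $\tau$, together with the observation that $\pi_2$ restricts to the identity on $\pi_1^{-1}(0)=\Sub(\cA_{n,p})$.
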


\begin{theorem}\label{proposition:Poulsen}
$\IRS_\tau(\cA_{n,p})$ is a Poulsen simplex. Therefore, $\IRS_0(\cL_{n,p})=\IRS_*(\cL_{n,p})$ is also a Poulsen simplex.
\end{theorem}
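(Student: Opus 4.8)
The plan is to show that $\IRS_\tau(\cA_{n,p})$, the simplex of shift-invariant probability measures on $\Sub(\cA_{n,p})$, is Poulsen, i.e. that its ergodic measures are weak*-dense in the whole simplex. Since the extreme points of $\IRS_\tau(\cA_{n,p})$ are exactly the ergodic shift-invariant measures, it suffices to approximate an arbitrary $\mu \in \IRS_\tau(\cA_{n,p})$ by ergodic ones in the weak* topology. The second assertion then follows immediately from the preceding lemma, which gives an affine isomorphism $(\pi_2)_* : \IRS_0(\cL_{n,p}) \to \IRS_\tau(\cA_{n,p})$; an affine homeomorphism of simplices carries extreme points to extreme points and preserves density, so $\IRS_0(\cL_{n,p}) = \IRS_\ast(\cL_{n,p})$ inherits the Poulsen property.

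\medskip

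\noindent \textbf{Reduction to a concatenation construction.} I would view $\Sub(\cA_{n,p})$ as a closed, shift-invariant subset of the full shift $\{0,1\}^{\Z}$-type space built from $\cA_{n,p} \cong \oplus_\Z (\Z/p\Z)^n$, where the shift $\tau$ is multiplication by $x$. The key point is that the constraint ``$H$ is a subgroup'' is preserved under the shift, so $\IRS_\tau(\cA_{n,p})$ is the space of invariant measures of a subshift-like system. Following Olsen's argument for the full shift (and Glasner-Weiss), the standard strategy is: given $\mu$ and given $\varepsilon > 0$ together with finitely many test functions (which by Stone-Weierstrass can be taken to depend only on coordinates in a large finite window, i.e. on the restriction of $H$ to a large but finite block of coordinates $[-L, L]$), I would produce an ergodic measure $\nu$ that agrees with $\mu$ approximately on cylinder sets determined by blocks of size $L$. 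The construction is to sample a long pattern (a restriction of a subgroup to a block of size $N \gg L$) from $\mu$, then build a periodic or quasi-periodic configuration by concatenating independent $\mu$-typical blocks; averaging over phases produces a shift-invariant measure, and taking the block length to infinity drives ergodicity via a law-of-large-numbers / generic-point argument, while the frequency of $L$-patterns converges to their $\mu$-measure.

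\medskip

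\noindent \textbf{The main obstacle.} The subtlety, and the place where the authors flag Olsen's error, is precisely that the concatenated configuration must itself lie in $\Sub(\cA_{n,p})$: one cannot freely concatenate arbitrary subgroup-blocks and still obtain a subgroup. This is the analogue of Olsen's mistake where the cylinder distributions were claimed to match on the full block $\Lambda(a)$ when in fact agreement holds only on blocks of much smaller size. So the hard part will be arranging the concatenation within the subgroup constraint: when I splice together finite pieces of typical subgroups, the group structure across the seams will be corrupted in a boundary region, and I can only guarantee that the resulting object is a genuine subgroup — and that its $L$-block statistics match $\mu$ — on the sub-blocks that lie safely away from the seams. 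The fix is to make the interior blocks of length $N$ large relative to the seam/boundary width and relative to $L$, so that the fraction of coordinates spoiled by boundary effects is $O(L/N) \to 0$; then the empirical frequencies of $L$-patterns in the interior still converge to their $\mu$-values. I would therefore carefully describe how a subgroup of $\cA_{n,p}$ restricts to a finite window, use Lemma~\ref{group} (or the $R$-module / PID structure of $\cA_{n,p}$ over $R = \F_p[x, x^{-1}]$) to control which local patterns extend to global subgroups, and verify that the spliced configuration is a subgroup while quantifying the boundary loss. Once ergodicity of the limiting measure and the approximate matching of block statistics are both established, weak* density of ergodic measures follows, completing the proof.
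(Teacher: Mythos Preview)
Your high-level plan --- block concatenation \`a la Olsen, average over phases, let the block length go to infinity --- is exactly what the paper does. But you have misdiagnosed the obstacle, and this leads you to propose machinery that is not needed.

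The concatenation step is \emph{trivial}, not hard. If each $H_k$ is a subgroup of $\cX^{[0,m-1]}$ (the elements of $\cA_{n,p}$ supported on $[0,m-1]$), then $\bigoplus_{k\in\Z} x^{km}H_k$ is automatically a subgroup of $\cA_{n,p}$: it is a direct sum of subgroups supported on pairwise disjoint coordinate blocks. There is no ``splicing constraint'' to satisfy, no question of which local patterns extend to global subgroups, and no role for the PID structure of $R$ or for Lemma~\ref{group} (that lemma describes subgroups of $\cL_{n,p}$ \emph{not} contained in $\cA_{n,p}$ and is irrelevant here). So your sentence ``one cannot freely concatenate arbitrary subgroup-blocks and still obtain a subgroup'' is simply false in this setting.

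The actual seam issue --- and the correction to Olsen's mistake that the introduction alludes to --- is purely statistical. The paper pushes $\mu$ forward to $\Sub(\cX^{[0,m-1]})$ via $H\mapsto H\cap\cX^{[0,m-1]}$, takes the Bernoulli product over $\Z$, maps back via the direct-sum map $\Phi$, and averages over $\tau^0,\ldots,\tau^{m-1}$ to get an ergodic $\mu_m$. For a test window $[0,j]$ and phase $k$, the law of $H\cap\cX^{[0,j]}$ under $\tau^k\Phi_*((P^{0,m-1}_*\mu)^\Z)$ matches that under $\mu$ \emph{exactly} whenever $[k,k+j]\subset[0,m-1]$, because then the window sees only the single block $H_0$, and $(H'\cap\cX^{[0,m-1]})\cap\cX^{[k,k+j]}=H'\cap\cX^{[k,k+j]}$ has the right distribution by shift-invariance of $\mu$. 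For the at most $j$ bad phases where the window crosses a seam the law is wrong, giving $\|P^{0,j}_*\mu_m - P^{0,j}_*\mu\|_1 \le 2j/m \to 0$. That is the entire content of the fix: the matching holds on windows of size $j$, not on the full block of size $m$. Your instinct that the error is $O(L/N)$ is right; the reason has nothing to do with forcing the spliced object to be a subgroup.
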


\begin{proof}
Given an element $w \in \cA_{n,p}$, we let $w=(\ldots, w_{-1},w_0,w_1,w_2,\ldots)$ with each $w_k \in (\Z/p\Z)^n$. For $i\le j$ let $\cX^{[i,j]}$ be the subgroup of $\cA_{n,p}$ which consists of all $w$ with $w_k=0$ if $k\not\in[i,j]$.  For $i\le j$, let $P^{i,j}:\Sub(\cA_{n,p}) \to \Sub(\cX^{[i,j]} )$ be the intersection map $P^{i,j}(H) = H\cap \cX^{[i,j]}$. It is easy to see that $P^{i,j}$ is continuous.  It induces a map $P^{i,j}_*:\IRS(\cA_{n,p}) \to \IRS(\cX^{[i,j]})$.

Let $\mu \in \IRS_\tau(\cA_{n,p})$. It suffices to show that $\mu$ is a limit point of ergodic measures in $\IRS_\tau(\cA_{n,p})$. Let $m>0$ be an integer. Then $P^{0,m-1}_*\mu$ is a measure on $\Sub(\cX^{[0,m-1]})$. So the infinite direct product $(P^{0,m-1}_*\mu)^\Z$ is a measure on $\Sub(\cX^{[0,m-1]})^\Z$. Note that this measure is ergodic under the shift because it is Bernoulli.


Let $\Phi: (\Sub(\cX^{[0,m-1]}))^\Z \to \Sub(\cA_{n,p})$ be the map
$$\Phi(\ldots, H_{-1}, H_0, H_1,\ldots) = \bigoplus_{k \in \Z} x^{km}H_k$$
where each $H_k \in \Sub(\cX^{[0,m-1]})$.

This map intertwines the shift on $(\Sub(\cX^{[0,m-1]}))^\Z$ with the $m$-th power of the shift on $\Sub(\cA_{n,p})$. So $\Phi_*((P^{0,m-1}_*\mu)^\Z)\in \IRS(\cA_{n,p})$ is invariant and ergodic under $\tau^m$. Finally, let
$$\mu_m:= \frac{1}{m} \sum_{k=0}^{m-1} \tau^k \Phi_*((P^{0,m-1}_*\mu)^\Z).$$
This is a shift-invariant ergodic measure in $\IRS_\tau(\cA_{n,p})$. It is ergodic because if $E \subset \Sub(\cA_{n,p})$ is any measurable shift-invariant set then for each $k$,
$$\tau^k \Phi_*((P^{0,m-1}_*\mu)^\Z)(E) \in \{0,1\}$$
by ergodicity of $\Phi_*((P^{0,m-1}_*\mu)^\Z)$. By shift invariance, $\tau^k \Phi_*((P^{0,m-1}_*\mu)^\Z)(E) = \Phi_*((P^{0,m-1}_*\mu)^\Z)(E) $ for every $k$. So $\mu_m(E) \in \{0,1\}$ as required.

It now suffices to show that $\lim_{m\to\infty} \mu_m = \mu$. Note that if $j+k<m$ then $P^{0,j}_*\mu = P^{0,j}_* (\tau^k \Phi_*((P^{0,m-1}_*\mu)^\Z))$. Indeed, let $T$ be any subgroup of $\cX^{[0,j]}$. On the one element set $\{T\}\in \Sub(\cX^{[0,j]})$ the left hand side is equal to $\mu(\{H<\cA_{n,p} |~H\cap \cX^{[0,j]}=T\})$, and the right hand side is equal to the measure $\Phi_*((P^{0,m-1}_*\mu)^\Z)$ of the set $\{H<\cA_{n,p} |H\cap \cX^{[k,j+k]}=T\}$. If $[k,j+k]\subset[0,m-1]$ then it is further equal to the measure $P^{0,m-1}_*\mu$ of the set $\{H<\cX^{[0,m-1]} |H\cap \cX^{[k,j+k]}=T\}$ and hence to the measure $\mu$ of the set
\[
\{H<\cA_{n,p} |(H\cap \cX^{[0,m-1]})\cap \cX^{[k,j+k]}=T\}=\{H<\cA_{n,p} |H\cap \cX^{[k,j+k]}=T\},
\]
and by shift invariance of $\mu$ we have the equality.
Therefore, $\|P^{0,j}_*\mu_m - P^{0,j}_*\mu\|_1 \le 2j/m$ which implies $\lim_{m\to\infty} \mu_m = \mu$.


\end{proof}

\begin{remark}
If instead of $\cA_{n,p}$ we consider the compact group $\hat{\cA}_{n,p}=\prod_{\Z}(\Z/p\Z)^n$, and the set of its closed subgroups $\Sub(\hat{\cA}_{n,p})$, then by replacing in the above proof  the map $\Phi$ by
\[
\hat{\Phi}(\ldots, H_{-1}, H_0, H_1,\ldots) = \dots\times H_{-1}\times H_0\times H_1\times\dots
\]
we have  proof that the simplex $\IRS_\tau(\hat{\cA}_{n,p})$ of shift invariant measures on $\Sub(\hat{\cA}_{n,p})$ is Poulsen.
\end{remark}

\section{IRS's of $F\wr\Gamma$}\label{sec:Poulsen2}

In this section we determine the type of simplex of IRS's supported on $\Sub(\oplus_\Gamma F)$ for an arbitrary wreath product $F\wr\Gamma$ of a finite group $F$ and countable group $\Gamma$. In particular, we obtain another proof of Theorem \ref{proposition:Poulsen}.

 We think of $N=\oplus_\Gamma F$ as the set of all functions $\phi:\Gamma \to F$ such that $\phi^{-1}(f)$ is finite for every $f \in F \setminus \{e\}$. The multiplication in $N$ is defined  coordinate-wise: $(\phi \psi)(f) = \phi(f)\psi(f)$. The group $\Gamma$ acts on $N$ by $(\gamma \phi)(f) = \phi(\gamma^{-1} f)$. This action induces a $\Gamma$-action on $\Sub(N)$, the space of subgroups of $N$. Let $\IRS_\Gamma(N)$ be the simplex of $\Gamma$-invariant Borel probability measures on $\Sub(N)$ with the weak* topology. 

\noindent {\bf Theorem \ref{thm:main3}}. {\em
If $\Gamma$ does not have property (T) then $\IRS_\Gamma(N)$ is a Poulsen simplex. If $\Gamma$ has property (T) then $\IRS_\Gamma(N)$ is a Bauer simplex.}

\begin{proof}
By \cite[Theorem 1]{GW97}, if $\Gamma$ has property (T) and $\Gamma \cc X$ is any continuous action on a compact metrizable space, then the space of shift-invariant Borel probability measures on $X$ is a Bauer simplex. So we may assume $\Gamma$ does not have property (T).


It is well-known that the ergodic measures in $\IRS_\Gamma(N)$ are the extreme points of $\IRS_\Gamma(N)$. By Krein-Milman theorem the only closed convex subset of $\IRS_\Gamma(N)$ which contains the ergodic measures is $\IRS_\Gamma(N)$ itself. Therefore it suffices to show that the closure of the subset of ergodic measures in $\IRS_\Gamma(N)$ is convex. So let $\mu_1,\mu_2 \in \IRS_\Gamma(N)$ be ergodic measures. Hence it is enough to show that $(1/2)(\mu_1+\mu_2)$ is a limit of ergodic measures. 

For the definition of weak mixing and strong mixing we recommend \cite{Sch84}. A result mentioned there in combination with a result from \cite{BV93} shows, as is indicated in \cite{GW97}, Theorem 2, that there exists a weakly mixing measure-preserving action $\Gamma \cc (X,\lambda)$ on standard probability space $(X,\lambda)$ which is ergodic but not strongly ergodic. This implies the existence of a sequence $\{A_i\}_{i=1}^\infty$ of Borel subsets of $X$ such that
\begin{itemize}
\item $\lim_{i\to\infty} \lambda(A_i \vartriangle gA_i) = 0$ for every $g\in \Gamma$;
\item $\lambda(A_i)=1/2$ for every $i$,
\end{itemize}
see \cite{JS87}, lemma 2.3.

Let $\Psi_n: \Sub(N) \times \Sub(N) \times X \to \Sub(N)$ be the map 
$$\Psi_n(H_1,H_2, x) = (H_1 \cap \oplus_{J_n(x)} F) \times (H_2 \cap \oplus_{K_n(x)} F)$$ where 
$$J_n(x) = \{g\in \Gamma:~ x \in gA_n\}, \quad K_n (x) = \{g\in \Gamma:~ x \in X \setminus gA_n\}.$$
 Note that $\Psi_n$ is $\Gamma$-equivariant with respect to the diagonal action of $\Gamma$. 
 
 Let $\nu$ be an ergodic joining of $\mu_1$ and $\mu_2$. More precisely, $\nu$ is a $\Gamma$-equivariant Borel probability measure on $\Sub(H)\times \Sub(H)$ whose projections are $\mu_1,\mu_2$ respectively. For example, we may choose $\nu$ to be an ergodic component of the product $\mu_1\times \mu_2$.  Let $\eta_n:=(\Psi_n)_*(\nu \times \lambda) \in \IRS_\Gamma(N)$. Because $\Gamma \cc (X,\lambda)$ is weakly mixing, it follows that $\Gamma \cc (\Sub(N) \times \Sub(N) \times X, \nu \times \lambda)$ is ergodic (\cite{Sch84}, Proposition 2.2). So $\eta_n$ is ergodic.
 
 We claim that $\eta_n$ converges to $(1/2)(\mu_1+\mu_2)$ as $n\to\infty$. It suffices to show that for any finite set $U \subset \Gamma$ and any subgroup $T < \oplus_U F $, we have
$$\lim_{n\to\infty} \eta_n( \{ S \in \Sub(N):~ S \cap \oplus_U F = T\}) = (1/2)(\mu_1+\mu_2)( \{ S \in \Sub(N):~ S \cap \oplus_U F = T\}).$$
If $x \in \bigcap_{u \in U} uA_n$ then $U \subset J_n(x)$ which implies
$$\Psi_n(H_1,H_2,x) \cap \oplus_U F = H_1 \cap \oplus_U F.$$
Similarly, if $x \in \bigcap_{u \in U} u(X\setminus A_n)$ then $U \subset K_n(x)$ which implies
$$\Psi_n(H_1,H_2,x) \cap \oplus_U F = H_2 \cap \oplus_U F.$$
Therefore 
\begin{eqnarray*}
&&\eta_n( \{ S \in \Sub(H):~ S \cap \oplus_U F = T\})\\
&=& \nu\times\lambda(\{ (H_1,H_2,x):~\Psi_n(H_1,H_2,x) \cap \otimes_U F = T\})\\
&=& \nu\times\lambda(\{ (H_1,H_2,x):~x \in\bigcap_{u \in U} uA_n,~H_1 \cap \oplus_U F = T\})\\
&&+\nu\times\lambda(\{ (H_1,H_2,x):~x \in\bigcap_{u \in U} u(X\setminus A_n),~H_2 \cap \oplus_U F = T\})\\
&&+\nu\times\lambda(\{ (H_1,H_2,x):~x \notin (\bigcap_{u \in U} uA_n) \cup (\bigcap_{u \in U} u(X\setminus A_n)),~\Psi_n(H_1,H_2,x) \cap \oplus_U F = T\})\\
&=& \mu_1(\{ H_1:~ H_1 \cap \oplus_U F = T\}) \lambda( \bigcap_{u \in U} uA_n) \\
&&+  \mu_2(\{ H_2:~ H_2 \cap \oplus_U F = T\}) \lambda( \bigcap_{u \in U} u(X\setminus A_n)) \\
&&+\nu\times\lambda(\{ (H_1,H_2,x):~x \notin (\bigcap_{u \in U} uA_n) \cup (\bigcap_{u \in U} u(X\setminus A_n)),~\Psi_n(H_1,H_2,x) \cap \oplus_U F = T\}).
\end{eqnarray*}
Because $\{A_n\}_{n=1}^\infty$ is asymptotically invariant, 
$$\lim_{n\to\infty} \lambda( \bigcap_{u \in U} uA_n) = \lim_{n\to\infty} \lambda( \bigcap_{u \in U} u(X\setminus A_n))=1/2.$$
Also note
\begin{eqnarray*}
&&\nu\times\lambda(\{ (H_1,H_2,x):~x \notin (\bigcap_{u \in U} uA_n) \cup (\bigcap_{u \in U} u(X\setminus A_n)),~\Psi_n(H_1,H_2,x) \cap \oplus_U F = T\})\\
& \le&  1-  \lambda( \bigcap_{u \in U} uA_n) - \lambda( \bigcap_{u \in U} u(X\setminus A_n))
\end{eqnarray*}
tends to zero as $n\to\infty$. So the equations above imply
$$\lim_{n\to\infty} \eta_n( \{ S \in \Sub(N):~ S \cap \oplus_U F = T\}) = (1/2)(\mu_1+\mu_2)( \{ S \in \Sub(N):~ S \cap \oplus_U F = T\})$$
as required.
\end{proof}

\section{Topological and Borel structures on $\Sub(G)$}\label{sec:topological}
Let us make a few remarks of a general character concerning IRS's that may be useful.
\begin{proposition}
 If $N<G$ is a subgroup then the set $\Sub(G,N)$ of subgroups of $G$ containing $N$ is closed. 
\end{proposition}
\begin{proposition}(\cite{Vo12})\label{ginn}
The clopen sets $C_{A,B}=\{H\in \Sub(G)| A\subset H, H\cap B=\emptyset\}$, where $|A|,|B|<\infty$, form a basis for the topology of $\Sub(G)$. The sets $Q_g=\{N |g\in N\}$ generate the Borel $\sigma$-algebra of $\Sub(G)$.
\end{proposition}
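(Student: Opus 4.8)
The plan is to identify the sets $Q_g$ and $C_{A,B}$ as the traces on $\Sub(G)$ of the standard subbasic and basic open sets of the Tychonoff product topology on $\{0,1\}^G$, and then to read off both assertions using that $G$ is countable. Throughout I identify a subgroup $H$ with its indicator $\xi_H\in\{0,1\}^G$, so that the topology on $\Sub(G)$ is the subspace topology inherited from $\{0,1\}^G$.

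First I would handle the basis and clopen claims together. A subbasis for the product topology consists of the single-coordinate cylinders $\{\xi:\xi(g)=\epsilon\}$ with $g\in G$ and $\epsilon\in\{0,1\}$; intersecting with $\Sub(G)$, the value $\epsilon=1$ yields $Q_g=\{H:g\in H\}$ and the value $\epsilon=0$ yields $\Sub(G)\setminus Q_g=\{H:g\notin H\}$. Each basic open set of the product topology is a finite intersection of these, and grouping the coordinates on which membership is required into a finite set $A$ and those on which non-membership is required into a finite set $B$, such an intersection is precisely $C_{A,B}=\{H:A\subset H,\ H\cap B=\emptyset\}$ (with $C_{A,B}=\emptyset$ when $A\cap B\neq\emptyset$). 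Hence the family $\{C_{A,B}:|A|,|B|<\infty\}$ is exactly the collection of basic open sets of $\Sub(G)$, establishing the first assertion. Moreover each coordinate projection $\pi_g\colon\{0,1\}^G\to\{0,1\}$ is continuous and $\{0,1\}$ is discrete, so $Q_g$ and its complement are clopen; therefore $C_{A,B}=\bigcap_{a\in A}Q_a\cap\bigcap_{b\in B}(\Sub(G)\setminus Q_b)$, being a finite intersection of clopen sets, is clopen.

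Finally I would prove that the $Q_g$ generate the Borel $\sigma$-algebra $\mathcal{B}(\Sub(G))$. One inclusion is trivial: each $Q_g$ is open, so $\sigma(\{Q_g:g\in G\})\subseteq\mathcal{B}(\Sub(G))$. For the converse I would invoke the countability of $G$: the finite subsets of a countable set form a countable family, so $\{C_{A,B}\}$ is a countable basis and $\Sub(G)$ is second countable. Every open set is then a countable union of sets $C_{A,B}$, and since each $C_{A,B}$ is a finite Boolean combination of the $Q_g$ it lies in $\sigma(\{Q_g\})$; hence every open set lies in $\sigma(\{Q_g\})$ and so $\mathcal{B}(\Sub(G))\subseteq\sigma(\{Q_g\})$, giving equality. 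I expect no real difficulty here; the only points needing care are to match the two types of constraint (membership versus non-membership) correctly to the pair $(A,B)$, and to note that the second-countability underlying the Borel step relies on $G$ being countable.
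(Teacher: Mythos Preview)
Your argument is correct: identifying $Q_g$ and its complement with the subbasic cylinders of $\{0,1\}^G$ restricted to $\Sub(G)$, recognizing $C_{A,B}$ as finite intersections thereof, and then using countability of $G$ to pass to the Borel $\sigma$-algebra is exactly the natural route, and each step is sound. Note, however, that the paper itself does not supply a proof of this proposition---it simply attributes the statement to \cite{Vo12}---so there is no in-paper argument to compare against; your write-up stands on its own as a clean verification of the cited fact.
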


If $\phi:G\to H$ is a homomorphism, then $N\mapsto\phi(N)$ maps $\Sub(G)$ to $\Sub(H)$ and $N\mapsto\phi^{-1}(N)$ maps $\Sub(H)$ to $\Sub(G)$. The image of the first map is $\Sub(\phi(G))$ and of the second map $\Sub(G,\ker(\phi))$. A special case of the second map is the intersection with some fixed subgroup. Indeed, if $i:G\to H$ is an inclusion, then $i^{-1}(N)=N\cap G$.  
\begin{proposition}\label{prop5.3}
The map $N\mapsto\phi^{-1}(N)$ is continuous, and the map $N\mapsto\phi(N)$ is Borel.
\end{proposition}
\begin{proof}
For the first part, note that  $\phi^{-1}(N_m)\to \phi^{-1}(N)$ as $m\to\infty$ means that $\{g\in \phi^{-1}(N_m)\}$ stabilizes to $\{g\in\phi^{-1}(N)\}$ for any $g\in G$. Now, $g\in\phi^{-1}(N_m)$ if and only if $\phi(g)\in N_m$. That is, the events $\{g\in\phi^{-1}(N_m)\}$ and $\{\phi(g)\in N_m\}$ are equal. It follows that $\phi^{-1}(N_m)\to\phi^{-1}(N)$ if and only if the event $\{\phi(g)\in N_m\}$ stabilizes to $\{\phi(g)\in N\}$ for any $g\in G$. However, we know that $N_m\to N$ and so $\{v\in N_m\}$ stabilizes to $\{v\in N\}$ for any $v\in H$ at all, in particular for any $v$ of the form $\phi(g)$.

For the second part, we just need to compute $\{N| \phi(N)\in Q_h\}$, where $Q_h=\{N'| h\in N'\}$, since $Q_h,h\in H$ generate the Borel $\sigma-$algebra of $\Sub(H)$ by Proposition \ref{ginn}. Now notice that $\phi(N)\in Q_h$ means that $h\in \phi(N)$, which is equivalent to $\phi^{-1}(h)\cap N\neq\emptyset$. Thus $\{N| \phi(N)\in Q_h\}=\cup_{g\in\phi^{-1}(h)}Q_g$ which is Borel, since $\phi^{-1}(h)$ is countable (if $\ker\phi$ is finite, then it is actually clopen, and hence $N\mapsto\phi(N)$ is continuous). 
\end{proof}

Thus given an IRS $\mu$ on $G$ we can get an induced IRS $\phi_*\mu$ on $\phi(G)<H$, and given an IRS $\nu$ on $H$ we can get induced IRS $(\phi^{-1})_*\nu$ on $G$, supported on $\Sub(G,\ker(\phi))$.

\section{The Cantor-Bendixson rank of $\Sub(\cL_{n,p})$}\label{sec:CB}

Here we will show that the Cantor-Bendixson rank of $\Sub(\cL_{n,p})$, denoted $r_{CB}(\cL_{n,p})$, is the first infinite ordinal $\omega$. To do this, we will deal with the topological space $\Sub(\cL_{n,p}) \sm\Sub(\cA_{n,p})$, and it will be convenient to give it a special name.
\begin{definition}
Denote $\Sub(\cL_{n,p}) \sm\Sub(\cA_{n,p})$ by $\ccZ$.
\end{definition} 
By theorem \ref{perfect} we know that the kernel of $\Sub(\cL_{n,p})$  is exactly $\Sub(\cA_{n,p})$. Since it is a closed subset of $\Sub(\cL_{n,p})$, any sequence of subgroups with limit   not in $\Sub(\cA_{n,p})$ consists, but for a finite number of them, of elements which are also not in $\Sub(\cA_{n,p})$. Therefore the Cantor-Bendixson rank of $\Sub(\cL_{n,p})$ is equal to the Cantor-Bendixson rank of $\ccZ$. In order to compute $r_{CB}(\ccZ)$  we will define the Cantor-Bendixson rank of a poset, construct a map $\ccZ$ onto a particular poset denoted by $\Q$ which will encode the convergence properties of $\ccZ$ in terms of the order relation on $\Q$. We will then prove that $r_{CB}(\ccZ) = r_{CB}(\Q)$ and that $r_{CB}(\Q)=\omega$.

Let $\P$ be any set with transitive relation $<$. An element $p\in \P$ is {\em minimal} if there does not exist $q\in \P$ with $q<p$.
\begin{definition}
Let $\P_0$ be the set of all minimal elements of $\P$, and $\bar{\P}_0=\P\sm \P_0$. Inductively, for any ordinal $\alpha$ let $\P_{\alpha+1}$ be the union of $\P_\alpha$ and the set of minimal elements of $\bar{\P}_{\alpha}$, and $\bar{\P}_{\alpha+1}=\P\sm \P_{\alpha+1}$. For a limit ordinal $\beta$, let $\P_\beta=\cup_{\alpha<\beta}\P_\alpha$, and $\bar{\P}_\beta=\cap_{\alpha<\beta}\bar{\P}_{\alpha}$
The Cantor-Bendixson rank of $\P$, $r_{CB}(\P)$, is defined as the minimal ordinal $\alpha$ such that $\bar{\P}_{\alpha}=\bar{\P}_{\alpha+1}$.
\end{definition}
\begin{lemma}\label{saved}
Let $\P$ be such that $\bar{\P}_{r_{CB}(\P)}=\emptyset$. If $\P'\ss\P$ with the relation on $\P'$ induced from $\P$, then $r_{CB}(\P')\leq r_{CB}(\P)$.
\end{lemma}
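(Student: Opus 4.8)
The plan is to run a single transfinite induction comparing the two filtrations directly, proving that $\bar{\P'}_\alpha \ss \bar{\P}_\alpha$ for every ordinal $\alpha$. Here $\P'_\alpha,\bar{\P'}_\alpha$ denote the sets built from $\P'$ with its induced relation in exactly the same way that $\P_\alpha,\bar{\P}_\alpha$ are built from $\P$. It is convenient to first reformulate the construction: for a subset $S\ss\P$ carrying the induced relation, write $D(S)$ for the set obtained from $S$ by deleting all of its minimal elements. Then the definition unwinds to $\bar{\P}_{\alpha+1}=D(\bar{\P}_\alpha)$ at successors and $\bar{\P}_\beta=\cap_{\alpha<\beta}\bar{\P}_\alpha$ at limits, with $\bar{\P}_0=D(\P)$; and likewise $\bar{\P'}_0=D(\P')$, $\bar{\P'}_{\alpha+1}=D(\bar{\P'}_\alpha)$.

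The engine of the argument is that $D$ is monotone under inclusion: if $A\ss B\ss\P$ then $D(A)\ss D(B)$. Indeed, if $p\in D(A)$ then $p$ is not minimal in $A$, so there is some $q\in A$ with $q<p$; since $q\in B$ as well, $p$ fails to be minimal in $B$, whence $p\in D(B)$. The point is that restricting to a subset can only create more minimal elements, never destroy a witness $q<p$ of non-minimality, so passing to $\P'$ shrinks the surviving set at least as fast as in $\P$.

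With monotonicity in hand the induction is routine. For the base case, $\P'\ss\P$ gives $\bar{\P'}_0=D(\P')\ss D(\P)=\bar{\P}_0$. At a successor $\alpha+1$, the inductive hypothesis $\bar{\P'}_\alpha\ss\bar{\P}_\alpha$ together with monotonicity of $D$ yields $\bar{\P'}_{\alpha+1}=D(\bar{\P'}_\alpha)\ss D(\bar{\P}_\alpha)=\bar{\P}_{\alpha+1}$. At a limit $\beta$, intersecting the inclusions $\bar{\P'}_\alpha\ss\bar{\P}_\alpha$ over $\alpha<\beta$ gives $\bar{\P'}_\beta\ss\bar{\P}_\beta$. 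Hence $\bar{\P'}_\alpha\ss\bar{\P}_\alpha$ for all $\alpha$.

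Finally I would specialize to $\rho:=r_{CB}(\P)$. By hypothesis $\bar{\P}_\rho=\emptyset$, so the inclusion forces $\bar{\P'}_\rho=\emptyset$, and therefore $\bar{\P'}_{\rho+1}=D(\emptyset)=\emptyset=\bar{\P'}_\rho$. By definition of the rank this gives $r_{CB}(\P')\le\rho=r_{CB}(\P)$. I do not expect a genuine obstacle here: the only steps needing care are the limit case (where one uses that intersection preserves inclusions) and the observation that the hypothesis $\bar{\P}_{r_{CB}(\P)}=\emptyset$ is truly needed. Without it the conclusion can fail: taking $\P=\Z$ with its usual order (no minimal elements, so $r_{CB}(\P)=0$ but nonempty kernel) and $\P'=\N$ gives $\bar{\P'}_n=\{n,n+1,\dots\}$ and $\bar{\P'}_\omega=\emptyset$, so $r_{CB}(\P')=\omega>0$.
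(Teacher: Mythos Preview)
Your proof is correct and follows essentially the same approach as the paper's: both establish the inclusion $\bar{\P'}_\alpha\ss\bar{\P}_\alpha$ by transfinite induction, the paper phrasing the base case as $\P_0\cap\P'\ss\P'_0$ while you phrase it via monotonicity of the deletion operator $D$, which are contrapositives of one another. Your version is simply more explicit at the successor and limit steps, and your counterexample showing the hypothesis $\bar{\P}_{r_{CB}(\P)}=\emptyset$ is genuinely needed is a nice addition not present in the paper.
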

\begin{proof}
Note that $\P_0\cap \P'\ss \P'_0$, hence $\bar{\P'}_0\ss\bar{\P}_0$. Then by (transfinite) induction we obtain that $\bar{\P'}_\alpha\ss\bar{\P}_\alpha$ for any ordinal $\alpha$. It follows that if $\bar{\P}_\alpha=\emptyset$ then $\bar{\P'}_\alpha=\emptyset$.
\end{proof}

\begin{definition}
Let $\Q$ be the partially ordered set, with the set of elements $\N\times (\N\cup\{0\})$, and $(t',r')<(t,r)$ if and only if $t'|t$ and $t'r'<tr$.
\end{definition}
\begin{theorem}\label{thm:Q}
The Cantor-Bendixson rank of $\Q$ is $\omega$, where $\omega$ is the first infinite ordinal.
\end{theorem}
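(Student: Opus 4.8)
The plan is to show that the Cantor--Bendixson filtration of $\Q$ is governed entirely by the product $tr$, and that the divisibility constraint $t'\mid t$ never obstructs comparisons because one may always take $t'=1$. Concretely, I would prove by induction on the non-negative integer $k$ the identity
\[ \P_k = \{(t,r)\in\Q : tr\le k\}. \]
For the base case, an element $(t,r)$ is minimal iff no $(t',r')$ satisfies $t'\mid t$ and $t'r'<tr$; since $t\ge 1$, this forces $r=0$, i.e. $tr=0$, and conversely every $(t,0)$ is minimal because $t'r'\ge 0$ always. Hence $\P_0=\{(t,r)\in\Q: tr=0\}$.

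For the inductive step, assume $\P_k=\{tr\le k\}$, so that $\bar{\P}_k=\{tr\ge k+1\}$. I would show that the minimal elements of $\bar{\P}_k$ are exactly those with $tr=k+1$. If $tr=k+1$, then any $(t',r')<(t,r)$ has $t'r'<k+1$, hence lies in $\P_k$ and not in $\bar{\P}_k$, so such $(t,r)$ is minimal in $\bar{\P}_k$. If instead $tr\ge k+2$, then the element $(1,k+1)$ satisfies $1\mid t$ and $1\cdot(k+1)=k+1<tr$, so $(1,k+1)<(t,r)$ while $(1,k+1)\in\bar{\P}_k$; thus $(t,r)$ is not minimal. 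This yields $\P_{k+1}=\P_k\cup\{tr=k+1\}=\{tr\le k+1\}$, completing the induction. The single observation doing all the work is that $(1,m)<(t,r)$ whenever $m<tr$, irrespective of $t$, which collapses the whole analysis to the value $tr$.

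Finally I would read off the rank. For every finite $k$ the set $\bar{\P}_k=\{tr>k\}$ is nonempty, and $(1,k+1)\in\bar{\P}_k\setminus\bar{\P}_{k+1}$, so $\bar{\P}_k\ne\bar{\P}_{k+1}$ and no finite ordinal can equal $r_{CB}(\Q)$. On the other hand $\P_\omega=\bigcup_{k<\omega}\P_k=\Q$, since every element of $\Q$ has finite product $tr$ and therefore lies in some $\P_{tr}$; hence $\bar{\P}_\omega=\emptyset$, and consequently $\bar{\P}_{\omega+1}=\emptyset=\bar{\P}_\omega$. Therefore $\omega$ is the least ordinal with $\bar{\P}_\alpha=\bar{\P}_{\alpha+1}$, i.e. $r_{CB}(\Q)=\omega$.

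I do not anticipate a genuine obstacle here; the only point requiring care is verifying at each inductive stage that the divisibility relation does not produce additional minimal elements, and this is settled cleanly by the universal lower bound $(1,m)$, which is comparable below any $(t,r)$ of larger product regardless of the first coordinate.
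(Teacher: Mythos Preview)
Your proof is correct. It is also more direct than the paper's, which argues by sandwiching rather than by computing the filtration explicitly. The paper introduces an auxiliary poset $\Q'$ on $\N\times(\N\cup\{0\})$ with the relation $(a,b)<(a',b')$ iff $a\mid a'$ and $b<b'$, exhibits order-embeddings $(\N,<)\hookrightarrow\Q$ via $n\mapsto(2^n,1)$ and $\Q\hookrightarrow\Q'$ via $(t,r)\mapsto(t,tr)$, and then invokes the monotonicity lemma (Lemma~\ref{saved}) together with the easy facts $r_{CB}(\N,<)=\omega$ and $r_{CB}(\Q')=\omega$ to squeeze $r_{CB}(\Q)$ between them. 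Your argument bypasses the comparison posets entirely: the single observation that $(1,m)$ sits below every $(t,r)$ with $tr>m$ shows the divisibility condition never creates extra minimal elements, so the filtration is simply $\P_k=\{tr\le k\}$. This buys a self-contained computation and makes the structure of the filtration completely transparent; the paper's approach, on the other hand, illustrates how the monotonicity lemma can be used and reduces to posets whose rank is immediate.
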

\begin{proof}
Let $\Q'$ be the set $\N\times(\N\cup\{0\})$ together with relation $(a,b)<(a',b')$ if $a|a'$ and $b<b'$. Clearly, the Cantor-Bendixson rank of $\Q'$ is $\omega$. The Cantor-Bendixson rank of the natural numbers with the ``less than" relation, $(\N,<)$, is also $\omega$. 

Note that we have relation-preserving inclusions $\N\to\Q$, $n\mapsto(2^n,1)$ and $\Q\to\Q'$, $(t,r)\mapsto(t,tr)$. So the theorem follows from lemma \ref{saved}.
\end{proof}

The rest of this section is devoted to showing:
\begin{theorem}\label{thm:Q-hard}
The Cantor-Bendixson rank of $\Sub(\cL_{n,p})$ is equal to the Cantor-Bendixson rank of $\Q$.
\end{theorem}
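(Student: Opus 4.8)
The plan is to encode the topology of $\ccZ$ by a rank invariant and to match its Cantor--Bendixson process with that of $\Q$. Recall from Lemma \ref{3_1} that each $V\in\ccZ$ is given by a triple $(s,V_0,v)$ with $s=\pi_1(V)>0$ and $V_0=V\cap\cA_{n,p}$ an $x^s$-invariant subgroup; as in the proof of Lemma \ref{lemma:s}, $V_0$ is a submodule of the free $R$-module $N\cong R^{ns}$ on which $x$ acts as the $s$-fold shift, hence free of some finite rank $\rk(V_0)\le ns$. I would define
$$\Lambda:\ccZ\to\Q,\qquad \Lambda(V)=\big(s,\; ns-\rk(V_0)\big),$$
so the second coordinate is the corank $c(V):=ns-\rk(V_0)$, which is finite and vanishes exactly when $V_0$ has finite index in $\cA_{n,p}$, equivalently when $V$ has finite index in $\cL_{n,p}$.

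The base case matches isolated points with minimal elements. A finite-index subgroup of the finitely generated group $\cL_{n,p}$ is isolated in $\Sub(\cL_{n,p})$, using the clopen set determined by a generating set of $V$ together with one nontrivial representative of each nonidentity coset; hence $c(V)=0$ implies $V$ is isolated in $\ccZ$. Conversely, if $c(V)\ge 1$ I would exhibit finite-index overgroups $V^{(m)}\supsetneq V$ with $V^{(m)}\to V$, generalizing the one-dimensional model $(x^m-1)R\rtimes\Z \to \Z$: choose $x^s$-invariant subgroups $V_0^{(m)}\supseteq V_0$ of finite index with $V_0^{(m)}\to V_0$, which is possible precisely because $V_0$ has infinite index. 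Since the minimal elements of $\Q$ are exactly the pairs $(t,0)$, this shows $V$ is isolated iff $\Lambda(V)$ is minimal.

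The heart of the argument is a convergence-encoding lemma with two halves. For the forward half, suppose $V_i\to V$ with $V_i\ne V$. Since a fixed element $(v,s)\in V$ eventually lies in $V_i$, one gets $\pi_1(V_i)\mid s$; as $\pi_1(V_i)$ ranges over divisors of $s$ and $c(V_i)$ over $\{0,\dots,ns\}$, the value $\Lambda(V_i)$ takes only finitely many values, so after passing to a subsequence it is constant, say $(t,c')$ with $t\mid s$, and $V_0^{(i)}\to V_0$ in $\Sub(\cA_{n,p})$. A converging-basis argument shows that $\rk$ is lower semicontinuous (any finite $x^s$-independent subset of $V_0$ eventually lies in $V_0^{(i)}$), so $c$ is upper semicontinuous; I would then establish the crucial \emph{rigidity} statement that equality of corank forces eventual stabilization, yielding $\Lambda(V_i)<\Lambda(V)$ strictly in $\Q$. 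For the backward half, for each $q<\Lambda(V)$ in the image of $\Lambda$ I would construct $V_i\to V$ with $\Lambda(V_i)=q$, by thinning $V_0$ to the prescribed corank and, when the first coordinate drops to a proper divisor $t\mid s$, twisting the translation part $v$ so that all but finitely many elements of projection in $t\Z\setminus s\Z$ escape to infinity. I expect the rigidity statement to be the main obstacle: it requires the detailed description of the subgroup lattice of $\cL_{n,p}$ in \cite{GK12} to show that two $x^s$-invariant modules of equal corank that agree on a large enough window must coincide.

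Finally I would assemble the two halves. The forward half together with transfinite induction on the Cantor--Bendixson derivative gives that $V\in\ccZ^{(\alpha)}$ implies $\Lambda(V)$ is removed at stage $\ge\alpha$ in the Cantor--Bendixson process of $\Q$ (for the successor step, a subsequence with $\Lambda(V_i)$ constant and strictly $\Q$-below $\Lambda(V)$ raises the stage by one). Since $\bar{\Q}_\omega=\emptyset$ — every element of $\Q$ is removed at a finite stage, as $t'r'$ strictly decreases along descending chains — every point of $\ccZ$ has finite rank, so $\ccZ^{(\omega)}=\emptyset$ and $r_{CB}(\ccZ)\le r_{CB}(\Q)=\omega$ by Theorem \ref{thm:Q}. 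The backward half gives the reverse inequality: pulling back the chain $(2^0,1)<(2^1,1)<\cdots<(2^k,1)$ from the proof of Theorem \ref{thm:Q} through accumulating sequences produces, for every $k$, a subgroup of Cantor--Bendixson rank at least $k$, whence $r_{CB}(\ccZ)\ge\omega$. Combining these with the reduction $r_{CB}(\Sub(\cL_{n,p}))=r_{CB}(\ccZ)$ recorded after the definition of $\ccZ$ completes the proof.
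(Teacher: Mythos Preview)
Your forward half is sound and your upper bound $r_{CB}(\ccZ)\le\omega$ goes through: from $V_m\to V$ in $\ccZ$ one eventually has $V\subset V_m$ (Lemma~\ref{lll1}), hence $s_m\mid s$ and $V_0\subset (V_m)_0$; the rank comparison in the $x^s$-module then gives $s_m c_m\le sc$, and the equality case does force stabilization (via Lemma~\ref{lll0.1} when $s_m=s$, and a finite--quotient pigeonhole when $c=c'=0$). So the ``rigidity'' you flagged as the main obstacle is not the problem.

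The genuine gap is in your backward half, and it breaks your lower bound. With $\Lambda(V)=(s,\,ns-\rk_s(V_0))$ it is \emph{false} that every $q<\Lambda(V)$ in $\Q$ is realized by a sequence $V_m\to V$. Because any such sequence eventually satisfies $V\subset V_m$, one must have $V_0\subset (V_m)_0$, and therefore $\rk_s(V_0)\le\rk_s((V_m)_0)=(s/t)\,\rk_t((V_m)_0)$; unwinding gives the \emph{extra} necessary condition $sc'\le tc$, strictly stronger than the $\Q$-inequality $tc'<sc$. Your chain $(2^0,1)<(2^1,1)<\cdots$ violates this at every step ($2^{i+1}\cdot 1\le 2^i\cdot 1$ is false), so it cannot be pulled back through $\Lambda$. (Relatedly, ``thinning $V_0$ to the prescribed corank'' is the wrong direction: the approximants must \emph{enlarge} $V_0$.) The paper fixes exactly this by refining the first coordinate: it records the minimal shift-period $e=e(V_0)$ and sets $\Phi(V)=(s/e,\;ne-\rk_e(V_0))$. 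Since your corank satisfies $ns-\rk_s(V_0)=(s/e)\bigl(ne-\rk_e(V_0)\bigr)$, your $\Lambda$ collapses precisely the factorization that $\Phi$ retains; with $\Phi$ both halves of the encoding hold (the paper's Lemmas~\ref{a1} and~\ref{a2}, the latter resting on the construction of subgroups with a prescribed minimal period, Lemma~\ref{lll2.75}), and then the chain $(2^k,1)$ \emph{does} lift. Alternatively, you can keep $\Lambda$ for the upper bound and obtain the lower bound from the fixed-$s$ chains $(s,0)<(s,1)<\cdots<(s,k)$ (with $ns\ge k$), which do satisfy $sc'\le tc$ and lift by a direct rank-one enlargement in $R^{ns}/V_0$; this already yields points of arbitrarily high Cantor--Bendixson rank.
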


Theorems \ref{perfect}, \ref{thm:Q-hard} and \ref{thm:Q} immediately imply Theorem \ref{thm:main1}.


\subsection{A map $\Phi$ from $\ccZ$ to $\Q$}

In this section, we define a map from $\ccZ$ into $\Q$. Recall that $R=\F_p[x,x^{-1}]$. We identify $R^n$ with $\left(\F_p[x,x^{-1}]\right)^n$ and with the subgroup $\cA_{n,p}<\cL_{n,p}$. 
\begin{definition}\label{expp}
Let $M$ be an $R$-module and $U\ss M$ an abelian subgroup.  Let $e(U)$ be the minimal positive $e$ such that $x^eU=U$, if such $e$ exists, and $+\infty$ otherwise. Note that if $x^sU=U$ for some $s>0$ then $e(U)$ divides $s$.
\end{definition}

\begin{definition}
If $M$ is a finitely generated $R$-module then $\rk(M)$, the rank of $M$ is the maximal number of elements which freely generate a free submodule of $M$. If no free submodule of $M$ exists then $\rk(M):=0$.
\end{definition}
Note that since $R$ is principal ideal domain, all freely generating sets will have the same cardinality. This also implies that rank is additive ($\rk(M/M')=\rk(M)-\rk(M')$) and that $\rk(M)=0$ implies that $M$ is a sum of nontrivial factors of $R$, hence is finite (any nontrivial factor of $R$ is finite).

\begin{definition}
Let $U$ be an abelian subgroup of an $R$-module $M$. Suppose $x^mU = U$ for some integer $m\ge 1$. Then we can consider $U$ as an $R$-module via the rule $x\ast u :=x^mu$. Let $\rk_m(U)$ be the rank of $U$ considered as an $R$-module with this rule. For example, $\rk_m(R^n)=nm$. Define $r_{M,U}=ne-\rk_e(U)$ where $e=e(U)$ and $n=\rk(M)$. 
\end{definition}
Note that if $\rk_1(U)$ is defined then $U$ is in fact an $R$-submodule, hence $\rk_1(U)=\rk(U)$.

\begin{definition}\label{tv}
For each subgroup $V\ss \cL_{n,p}$ which does not lie in $R^n=\cA_{n,p}$ let $(s,U,v)$ be its triple. Define $t_V=s/e_U$ and $r_V=r_{R^n,U}$. 
\end{definition}
Note the difference between the notations $r_V$ and $r_{M,U}$. The first one implies that $V\in\ccZ$, while the notation $r_{M,U}$ implies that $M$ is an $R$-module of finite rank, and $U\ss M$ is an abelian subgroup with finite $e(U)$.


\begin{proposition}\label{thm:Q-key}
Define $\Phi: \ccZ \to \Q$ by $\Phi(V) = (t_V,r_V)$. Note that $\Phi$ is order preserving. Also, for any $q<\Phi(V) \in \Q$ there exists a sequence $\{V_m\}_{m=1}^\infty \subset  \ccZ$ such that $\Phi(V_m) = q$ for all $m$ and $V_m\to V$ as $m\to\infty$. Conversely if $\{V_m\}_{i=1}^\infty \subset \ccZ$ limits on $V \in \ccZ$ and $V_m \ne V$ for every $m$ then there exists  a subsequence $\{V_{m_k}\}_{k=1}^\infty$ and $q<\Phi(V)$ such that each $V_{m_k}$ is mapped to $q$. 
\end{proposition}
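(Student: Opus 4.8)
The plan is to read each subgroup through its triple $(s,U,v)$ of Lemma \ref{3_1}---so $s=\pi_1(V)$ and $U=V\cap\cA_{n,p}=\pi_2(V)$ is an abelian subgroup of $R^n=\cA_{n,p}$ with $x^sU=U$---and to reduce everything to one algebraic identity. Writing $R^{(m)}=\F_p[x^m,x^{-m}]\subseteq R$ (so that $\rk_m(U)$ is the rank of $U$ as a module over $R^{(m)}$), I would first record that $t_V\,r_V=\rk_s(R^n/U)$. This holds because $r_V=ne(U)-\rk_{e(U)}(U)=\rk_{e(U)}(R^n/U)$, and coarsening the period from $e(U)$ to $s$ (legitimate since $e(U)\mid s$) multiplies every rank by $s/e(U)=t_V$, so $t_Vr_V=\rk_s(R^n)-\rk_s(U)=\rk_s(R^n/U)$. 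This identity is the engine: it turns the two coordinates of $\Phi(V)$ into the intrinsic data ``period'' $e(U)=s/t_V$ and ``$R^{(s)}$-corank'' $\rk_s(R^n/U)=t_Vr_V$ of $U$, both of which behave controllably under the Chabauty limits that govern $\ccZ$. Order preservation I would treat as a direct computation from the inclusion criterion of Lemma \ref{3_1} and this identity; its genuine role---matching accumulation in $\ccZ$ with the order on $\Q$---is exactly the content of the next two assertions.

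For the second assertion I would fix $q=(t',r')<\Phi(V)$, so $t'\mid t_V$ and $t'r'<t_Vr_V$, which already forces $r_V>0$. Set $e'=s/t'$, a divisor of $s$ that is a multiple of $e(U)$, and $\delta'=(e'/e(U))r_V-r'>0$. First I would keep $s$ and $v$ fixed and perturb only $U$: choose $f_1,\dots,f_{\delta'}\in R^n$ whose images in $R^n/U$ extend to an $R^{(e')}$-free basis, and set $U_m=U+\sum_{i=1}^{\delta'}R^{(e')}g_mf_i$ with $g_m\in R^{(e')}$ of degree tending to infinity. I then must verify three things. Because the added module escapes to infinity, $U_m\to U$, hence $V_m:=(s,U_m,v)\to V$. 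Because its image in $R^n/U$ kills $\delta'$ free generators, $\rk_{e'}(R^n/U_m)=\rk_{e'}(R^n/U)-\delta'=r'$. And the minimal period of $U_m$ is exactly $e'$: a period $f<e'$ not divisible by $e(U)$ would force the fixed nonzero module $(\sum_k x^{kf}U)/U$ to sit inside the escaping $U_m/U$, impossible for large $m$, while a period $f<e'$ divisible by $e(U)$ is ruled out because $U_m/U$ has period exactly $e'$. Hence $t_{V_m}=s/e'=t'$ and $r_{V_m}=r'$, i.e. $\Phi(V_m)=q$.

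For the third assertion I start from $V_m\to V$ with $V_m\neq V$. Continuity of $\pi_2$ gives $U_m:=\pi_2(V_m)\to U$; since $(v,s)\in V$ lies in $V_m$ eventually, $s_m:=\pi_1(V_m)\mid s$, and after a subsequence $s_m=s'$ is constant, whence $x^{s'}U=\lim x^{s'}U_m=\lim U_m=U$. After further subsequences $e(U_m)=e^\ast$ is constant with $e(U)\mid e^\ast$ (as $x^{e^\ast}U=\lim x^{e^\ast}U_m=U$), and $\rk_{s'}(R^n/U_m)=\rho^\ast$ is constant with $\rho^\ast\le\rk_{s'}(R^n/U)$ by lower semicontinuity of rank (finitely many $R^{(s')}$-independent elements of $U$ lie in a finite window, hence eventually in $U_m$). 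Thus $\Phi(V_m)=(s'/e^\ast,\rho^\ast e^\ast/s')$ is a constant $q$, and $t_{V_m}\mid t_V$ since $e(U)\mid e^\ast\mid s'\mid s$. To see $\rho^\ast=t_{V_m}r_{V_m}<t_Vr_V$ I split cases. If $r_V=0$ the hypothesis is vacuous because $V$ is then isolated: when $s'=s$ the generators of $U$ eventually lie in $U_m$, so $U\subseteq U_m$, and with $U$ of finite index $U_m/U$ ranges over the finitely many submodules of the finite group $R^n/U$, forcing $U_m=U$ and $V_m=V$; when $s'<s$ a fixed element of projection $s'$ would have to leave $V_m$, impossible since $V_m$ meets only finitely many cosets of $U$. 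If $r_V>0$, then for $s'<s$ the factor $s/s'\ge2$ gives $\rho^\ast\le\rk_{s'}(R^n/U)<(s/s')\rk_{s'}(R^n/U)=t_Vr_V$, while for $s'=s$ one again has $U\subseteq U_m$ eventually, so absence of a strict drop would make $U_m/U$ torsion inside the finite torsion of $R^n/U$, forcing $U_m=U$ and $V_m=V$, a contradiction; hence the rank drops strictly. In all cases $q<\Phi(V)$.

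The hard part will be the two rigidity points on which the period and the rank are pinned down: in the construction, showing that a module escaping to infinity cannot secretly lower the minimal period below $e'$; and, dually in the convergence direction, showing that a same-rank Chabauty limit of periodic submodules is eventually constant (the torsion-finiteness and coset arguments). Both rest on the description of convergence in $\Sub(\cA_{n,p})$ as eventual agreement on larger and larger finite windows, combined with periodicity, and that is where I expect to spend the real effort.
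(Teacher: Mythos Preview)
Your overall architecture matches the paper's: the convergence direction is essentially Lemma~\ref{a1} and the construction direction is Lemma~\ref{a2}, and your identity $t_Vr_V=\rk_s(R^n/U)$ is a clean repackaging of Lemma~\ref{lll0.2} that streamlines several estimates. The convergence argument is correct (the sub-case $r_V=0$, $s'<s$ is awkwardly phrased but salvageable: once $V\subseteq V_m$ one sees $U_m$, and then $v_m\bmod U_m$, range over finitely many values, so a subsequence of $V_m$ is constant, contradicting $V_m\neq V$).

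There is, however, a genuine gap in the construction direction. Your claim that $e(U_m)=e'$ rests on the assertion that ``$U_m/U$ has period exactly $e'$,'' which you do not prove and which is false for your construction as written. Take $n=1$, $U=0$, $e(U)=1$, and target $e'=2$, $r'=0$, so $\delta'=2$. Any $R^{(2)}$-basis $f_1,f_2$ of $R$ satisfies $R^{(2)}f_1+R^{(2)}f_2=R$, hence
\[
U_m \;=\; R^{(2)}g_mf_1+R^{(2)}g_mf_2 \;=\; g_m\bigl(R^{(2)}f_1+R^{(2)}f_2\bigr)\;=\;g_mR,
\]
which is an ideal of $R$ and therefore has period $1$, not $2$. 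More generally, whenever $r'=0$ (so $\delta'$ equals the full $R^{(e')}$-rank of $R^n/U$) your added module is $g_m$ times an $x^{e(U)}$-invariant subgroup, and the period collapses to $e(U)$; even for intermediate $\delta'$ a bad choice such as $f_1=1,\ f_2=x^2$ with $e'=4$ gives period $2$. Using a single multiplier $g_m$ cannot avoid this: $e(g_mW)=e(W)$ for $W=\sum_iR^{(e')}\bar f_i$, and when $W$ is the full free part its period is $e(U)$, not $e'$. The paper confronts exactly this difficulty in Lemma~\ref{lll2.75}: for the full-rank case it uses a counting argument over the sets $P_a=\{U:\dim_{\F_p}R^n/U=1,\ x^aU=U\}$ to show that some $U$ with period exactly $b$ exists, and for smaller rank it gives an explicit coordinate construction; Lemma~\ref{lll4} then reduces to this case and Lemma~\ref{lll2.5} provides the escaping sequence. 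Your proof needs either this input or a modification of the construction (for instance, distinct coprime multipliers $g_m^{(i)}$ for different $f_i$) together with a verification that the resulting period is exactly $e'$.
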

Because of this Proposition, we say that $\Q$ is an {\em encoding poset} for the topological space $\ccZ$. Observe that Theorem \ref{thm:Q-hard} is immediately implied by Proposition \ref{thm:Q-key}. The rest of the section is devoted to proving Proposition \ref{thm:Q-key}.

\subsection{Convergence in $\ccZ$}
In this section we study properties of convergent sequences in $\ccZ$.

\begin{theorem}\label{3_3}
Suppose $V$ is a subgroup of $\mathcal{L}_{n,p}$. Then either $V$ is a subgroup of $\cA_{n,p}$ or $V$ is isomorphic to $\mathcal{L}_{k,p}$ for some $k\geq 1$. In particular if $s>0$ then $V$ is finitely generated, and if $V$ is of finite index in $\mathcal{L}_{n,p}$ then $V$ is isomorphic to $\mathcal{L}_{ns,p}$ where $s$ is the projection of $V$ onto $\mathbb{Z}$. If $V$ is a finitely generated subgroup then $V$ is either an elementary $p$-group of finite rank or isomorphic to $\mathcal{L}_{k,p}$ for some $k \geq 1$.
\end{theorem}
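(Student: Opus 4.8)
The plan is to apply the triple description of Lemma~\ref{3_1} and split on the value $s=\pi_1(V)$, the nonnegative generator of the image of $V$ in $\Z$. If $s=0$ then by that lemma $V=V\cap\cA_{n,p}=V_0$ lies entirely in $\cA_{n,p}$, which is the first alternative; so all the real content is in the case $s>0$, and it is there that I must exhibit an isomorphism with a lamplighter.

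Assume $s>0$. The key is to put a module structure on $V_0=V\cap\cA_{n,p}$ that sees the lamplighter. Set $S=\F_p[x^s,x^{-s}]$; this is a Laurent polynomial ring over a field, hence a PID isomorphic to $R$. Since $x^sV_0=V_0$ by Lemma~\ref{3_1}, $V_0$ is an $S$-submodule of $\cA_{n,p}$, and as an $S$-module $\cA_{n,p}\cong S^{ns}$ (the elements $x^je_i$ with $0\le j\le s-1$, $1\le i\le n$ form an $S$-basis, exactly as in the proof of Lemma~\ref{lemma:s}). A submodule of a finitely generated free module over a PID is again free, so $V_0\cong S^k$ for some $0\le k\le ns$. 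Writing $y=x^s$ and reading $S^k=\F_p[y,y^{-1}]^k$ as an abelian group equipped with the automorphism ``multiply by $y$'', this is precisely $\cA_{k,p}$ with its shift.

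It remains to glue $V_0$ to the $\Z$-direction. Choose $g=(v,s)\in V$. Every element $h\in V$ has $\Z$-coordinate a multiple $ms$ of $s$, so $h\cdot g^{-m}$ has $\Z$-coordinate $0$ and lies in $V_0$; hence $V=V_0\langle g\rangle$, with $\langle g\rangle\cong\Z$ (since $g^m\in\cA_{n,p}$ forces $m=0$) and $V_0\cap\langle g\rangle=\{e\}$. Thus $V=V_0\rtimes\langle g\rangle$, and a direct computation with the rule $gh=(v+x^sw,s+t)$ gives $g(w,0)g^{-1}=(x^sw,0)$, i.e. conjugation by $g$ acts on $V_0$ as multiplication by $x^s=y$, which under the identification above is exactly the shift of $\cA_{k,p}$. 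Therefore $V\cong\cA_{k,p}\rtimes\Z=\cL_{k,p}$, the action of the $\Z$-factor being the shift. (In the degenerate case $V_0=0$, i.e. $k=0$, this reads $V\cong\Z=\cL_{0,p}$.)

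The remaining assertions fall out of this decomposition. When $s>0$, $V$ is generated by $g$ together with the $k$ free $S$-generators of $V_0$, so it is finitely generated. If moreover $[\cL_{n,p}:V]<\infty$, then $[\cA_{n,p}:V_0]<\infty$ (the index factors as $s\cdot[\cA_{n,p}:V_0]$ via the projection onto $\Z$ and the second isomorphism theorem), and over the PID $S$ finite index forces $V_0$ to have full rank $k=ns$, giving $V\cong\cL_{ns,p}$. Finally, for the finitely generated dichotomy: if $s>0$ we are in the lamplighter case just established, while if $s=0$ then $V\subseteq\cA_{n,p}$ is an elementary abelian $p$-group, which is finite of finite rank as soon as it is finitely generated. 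The main obstacle is the second paragraph---recognizing $V_0$ as a free $S$-module of the correct rank and matching its $x^s$-action with the shift on $\cA_{k,p}$---together with the verification that conjugation by $g$ realizes precisely that shift, since this is what upgrades the abstract semidirect product into the specific wreath product $\cL_{k,p}$.
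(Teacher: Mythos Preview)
Your argument is correct. The paper itself does not prove this theorem: its entire proof is the one-line citation ``This is \cite[Theorem 3.5]{GK12}.'' What you have written is a self-contained proof using only ingredients already present in the paper---the triple description of Lemma~\ref{3_1} and the $S=\F_p[x^s,x^{-s}]$-module viewpoint from the proof of Lemma~\ref{lemma:s}. Your key observation, that $V_0$ is a submodule of the free $S$-module $\cA_{n,p}\cong S^{ns}$ and hence free of some rank $k$, together with the computation that conjugation by $(v,s)$ realizes the $S$-action (i.e.\ the shift on $\cA_{k,p}$), is exactly the structural content behind the cited result. Your treatment of the edge case $k=0$ (giving $V\cong\Z$) is also worth keeping: the stated bound $k\ge 1$ is slightly imprecise, since e.g.\ $V=\langle(0,s)\rangle$ has $V_0=0$, and your parenthetical remark handles this cleanly.
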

\begin{proof}
This is \cite[Theorem 3.5]{GK12}.
\end{proof}

\begin{lemma}\label{lll1}
Suppose $V_m\to V$ in $\ccZ$. Then there is $m_0$ such that $V\ss V_m$ for all $m\geq m_0$.
\end{lemma}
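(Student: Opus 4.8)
The plan is to exploit the single structural fact that distinguishes $\ccZ$ from $\Sub(\cA_{n,p})$: every element of $\ccZ$ is finitely generated. Indeed, $V\in\ccZ$ means precisely that $\pi_1(V)=s>0$, and Theorem \ref{3_3} then guarantees that $V$ is isomorphic to $\mathcal{L}_{k,p}$ for some $k\ge 1$ and in particular is finitely generated. Fix once and for all a finite generating set $g_1,\dots,g_k$ of $V$.

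Next I would unwind the definition of convergence in the topology of $\Sub(\cL_{n,p})$. By definition, $V_m\to V$ means that for every fixed $g\in\cL_{n,p}$ the indicator of the event $\{g\in V_m\}$ (a $\{0,1\}$-valued sequence) eventually stabilizes to the value of $\{g\in V\}$. Applying this to each generator $g_i$, for which $\{g_i\in V\}$ holds, we obtain an index $m_i$ with $g_i\in V_m$ for all $m\ge m_i$.

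Setting $m_0=\max_{1\le i\le k} m_i$, for every $m\ge m_0$ all of $g_1,\dots,g_k$ lie in the subgroup $V_m$, hence $V=\langle g_1,\dots,g_k\rangle\ss V_m$, which is exactly the assertion. I would note explicitly that $m_0$ is finite because the generating set is finite; this is the whole point, and it is where Theorem \ref{3_3} is essential.

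I do not expect a genuine obstacle here: the argument is a direct application of finite generation together with the pointwise (coordinatewise) nature of the product topology. The only thing to be careful about is that the statement is special to $\ccZ$ and would fail on $\Sub(\cA_{n,p})$, where subgroups may be infinitely generated and one would need to control infinitely many coordinates simultaneously; invoking Theorem \ref{3_3} to secure finite generation is therefore the crux, after which the proof is routine.
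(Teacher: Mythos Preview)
Your proof is correct and follows exactly the same approach as the paper: invoke Theorem~\ref{3_3} to conclude that $V\in\ccZ$ is finitely generated, then use the definition of convergence in $\Sub(\cL_{n,p})$ on each generator to find a common $m_0$ beyond which $V_m$ contains the generating set. Your write-up is simply a more detailed version of the paper's two-sentence argument.
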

\begin{proof}
Indeed, such $V$ is isomorphic to some $\L_{k,p}$ by Theorem \ref{3_3} and thus is finitely generated. So for some finite set of generators of $V$ there is $m_0$ such that for all $m\geq m_0$, $V_m$ contains this set of generators, and therefore $V\ss V_m$.
\end{proof}
\begin{lemma}\label{lll2}
Suppose $V_m\to V$ in $\Sub(\cL_{n,p})$. Then $V_m\cap\cA_{n,p}\to V\cap\cA_{n,p}$.
\end{lemma}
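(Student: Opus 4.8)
The plan is to unwind the definition of convergence in the Tychonoff topology and observe that the intersection map is ``local'' at each element of $\cA_{n,p}$. Recall that a sequence of subgroups $H_m$ converges to $H$ in $\Sub(\cA_{n,p})$ precisely when, for every $a\in\cA_{n,p}$, the indicator event $\{a\in H_m\}$ stabilizes to $\{a\in H\}$. So to prove the lemma it suffices to fix an arbitrary $a\in\cA_{n,p}$ and show that $\{a\in V_m\cap\cA_{n,p}\}$ stabilizes to $\{a\in V\cap\cA_{n,p}\}$.

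The key observation is that, for a fixed $a\in\cA_{n,p}$, membership in the intersection coincides with membership in the ambient group: $a\in V_m\cap\cA_{n,p}$ if and only if $a\in V_m$, and likewise $a\in V\cap\cA_{n,p}$ if and only if $a\in V$. I would then invoke the hypothesis $V_m\to V$ in $\Sub(\cL_{n,p})$: applied to the particular group element $g=a$ (which lies in $\cA_{n,p}\subset\cL_{n,p}$), the definition of convergence gives that $\{a\in V_m\}$ stabilizes to $\{a\in V\}$. Combining the two observations, $\{a\in V_m\cap\cA_{n,p}\}$ stabilizes to $\{a\in V\cap\cA_{n,p}\}$, which is exactly what is required. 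Since $a$ was arbitrary, $V_m\cap\cA_{n,p}\to V\cap\cA_{n,p}$ in $\Sub(\cA_{n,p})$.

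There is essentially no genuine obstacle here; the statement is just the continuity of $\pi_2$ recorded in Lemma \ref{lemma:pi}, and a special case of Proposition \ref{prop5.3} applied to the inclusion $\cA_{n,p}\hookrightarrow\cL_{n,p}$, whose associated preimage map is exactly $V\mapsto V\cap\cA_{n,p}$. The only point deserving any care is that convergence in $\Sub(\cA_{n,p})$ is tested solely against elements of $\cA_{n,p}$, so membership of elements of $\cL_{n,p}\sm\cA_{n,p}$ in the $V_m$ is irrelevant, and no information about the convergence $V_m\to V$ is used beyond what it asserts on $\cA_{n,p}$.
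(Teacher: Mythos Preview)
Your proof is correct and matches the paper's approach: the paper simply cites Proposition~\ref{prop5.3} (applied to the inclusion $\cA_{n,p}\hookrightarrow\cL_{n,p}$), whose proof is exactly the elementwise stabilization argument you wrote out. You have in effect reproduced that proof in the special case at hand and then noted the citation yourself.
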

\begin{proof}
Follows from Proposition \ref{prop5.3}.
\end{proof}
\begin{lemma}\label{lll0.1}
Let $U\ss U_m$ be submodules of $R^n$, $U_m\to U$, and $\rk(U)=\rk(U_m)$. Then $U_m$ stabilizes to $U$.
\end{lemma}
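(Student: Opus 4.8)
The plan is to trap all the $U_m$ between $U$ and its saturation in $R^n$, which is only a finite extension of $U$, and then let the topological hypothesis force stabilization.

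First I would introduce the saturation
\[
\bar U = \{v \in R^n :~ rv \in U \text{ for some nonzero } r \in R\},
\]
which is an $R$-submodule of $R^n$ because $R$ is a domain (closure under addition and under the $R$-action is a one-line check). Since $R^n/\bar U$ is torsion-free and finitely generated over the PID $R$, it is free, so $\bar U$ is a direct summand with $\rk(\bar U)=\rk(U)$; by additivity of rank this makes $\bar U/U$ a finitely generated torsion module, and by the remark that a rank-$0$ finitely generated $R$-module is finite, $\bar U/U$ is a finite group.

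Next I would observe that every $U_m$ lies in $\bar U$. Indeed $U\ss U_m$ and $\rk(U_m)=\rk(U)$, so by additivity of rank the quotient $U_m/U$ has rank $0$, hence is torsion; thus each $v\in U_m$ satisfies $rv\in U$ for some nonzero $r$, i.e. $v\in\bar U$. Therefore $U\ss U_m\ss\bar U$ for every $m$. Since $\bar U/U$ is finite, there are only finitely many subgroups (a fortiori submodules) lying between $U$ and $\bar U$, so the sequence $(U_m)$ assumes only finitely many distinct values.

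Finally I would invoke convergence. Suppose toward a contradiction that $U_m\neq U$ for infinitely many $m$. As only finitely many values occur, there is a single submodule $W$ with $U\subsetneq W\ss\bar U$ and $U_m=W$ for infinitely many $m$. Choosing $g\in W\sm U$, the event $\{g\in U_m\}$ holds for infinitely many $m$ while $\{g\in U\}$ is false, contradicting $U_m\to U$ (which requires $\{g\in U_m\}$ to stabilize to $\{g\in U\}$). Hence $U_m=U$ for all large $m$, i.e. $U_m$ stabilizes to $U$. The only real content is the uniform bound $U_m\ss\bar U$: without it one would know only that each individual index $[U_m:U]$ is finite, which is insufficient. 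Once finiteness of $\bar U/U$ pins the $U_m$ to finitely many values, the topological hypothesis closes the argument at once, so I do not expect any serious obstacle beyond correctly identifying the saturation.
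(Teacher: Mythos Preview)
Your proof is correct and essentially the same as the paper's. The paper works in the quotient $R^n/U\cong R^{n'}\oplus R_0$ with $R_0$ the finite torsion part, observes that $U_m/U$ has rank $0$ and hence must sit inside $R_0$, and then notes that a convergent sequence of subsets of a finite set stabilizes; your saturation $\bar U$ is exactly the preimage of $R_0$ under this quotient, so the two arguments are the same up to naming.
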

\begin{proof}
Let $n'=n-\rk(U)$. Since $U\ss U_m$ the event $v\in U_m$ is equal to the event $v+U\in U_m/U$. So if $U_m\to U$ it follows that $U_m/U\to 0$ in $R^n/U\simeq R^{n'}\oplus R_0$, where $R_0$ is a finite $R$ module. Now, $\rk(U_m)=\rk(U)$ implies that $U_m/U$ is finite, and so $U_m/U\subset R_0$. A convergent sequence of subsets of a finite set stabilizes.
\end{proof}
\begin{lemma}\label{lll2.5}
Let $U$ be an additive subgroup of $R^n$, and $\{f_m\}_{m\geq 1}$ be the list of irreducible elements of $\F_p[x]$. Then $U_m=f_m U\to \{0\}$.
\end{lemma}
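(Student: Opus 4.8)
The plan is to unwind the definition of convergence in $\Sub(\cA_{n,p})=\Sub(R^n)$ and reduce the claim to a single divisibility count. Recall that $U_m\to\{0\}$ means that for every $v\in R^n$ the event $\{v\in U_m\}$ eventually agrees with $\{v\in\{0\}\}$. Since each $f_mU$ is an additive subgroup it always contains $0$, so the content of the lemma is precisely that for every \emph{nonzero} $v\in R^n$ one has $v\notin f_mU$ for all but finitely many $m$. Accordingly I would fix a nonzero $v=(v_1,\dots,v_n)\in R^n$ and aim to show that only finitely many indices $m$ satisfy $v\in f_mU$.

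The key step is the divisibility constraint coming from $U\subseteq R^n$: if $v\in f_mU$ then in particular $v\in f_mR^n$, so $f_m\mid v_i$ in $R$ for every coordinate $i$. Hence it suffices to bound the set of irreducibles $f_m$ that divide $v$ coordinatewise. For this I would use that $R=\F_p[x,x^{-1}]$ is the localization of the principal ideal domain $\F_p[x]$ obtained by inverting $x$: its nonzero primes are, up to units, exactly the irreducible polynomials of $\F_p[x]$ different from $x$, while $x$ itself is a unit in $R$ and divides everything. Writing each coordinate as $v_i=x^{a_i}\tilde v_i$ with $\tilde v_i\in\F_p[x]$ and $\tilde v_i(0)\ne 0$, the divisibility $f_m\mid v_i$ in $R$ reduces, for $f_m\ne x$, to the divisibility $f_m\mid\tilde v_i$ in $\F_p[x]$.

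The count then finishes the argument. Since $v\ne 0$, at least one $\tilde v_i$ is a nonzero polynomial, and a nonzero element of $\F_p[x]$ has only finitely many irreducible divisors; so only finitely many $f_m$ divide that coordinate, and a fortiori only finitely many divide every coordinate of $v$. Adding the single index at which $f_m$ is an associate of $x$ still leaves a finite set of bad indices. For every $m$ outside this finite set we have $f_m\nmid v$, hence $v\notin f_mR^n\supseteq f_mU$. Therefore for each $v$ the event $\{v\in U_m\}$ stabilizes to $\{v\in\{0\}\}$, which is exactly $U_m\to\{0\}$ in $\Sub(\cA_{n,p})$.

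I do not expect a genuine obstacle here, as the proof is essentially a finiteness statement about divisors of a fixed Laurent polynomial. The one point requiring care is the special role of $x$: it is irreducible in $\F_p[x]$ yet a unit in $R$, so it divides every coordinate and cannot be folded into the finiteness estimate; it must be excluded from the divisibility reduction and treated separately as a lone exceptional index.
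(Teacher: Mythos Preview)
Your proposal is correct and follows essentially the same route as the paper: both reduce to the observation that $v\in f_mU$ forces $f_m$ to divide every coordinate of $v$ in $R$, and then use that a nonzero Laurent polynomial has only finitely many irreducible divisors. Your write-up is in fact more careful than the paper's, which asserts an ``if and only if'' where only the forward implication is true (and is all that is used), and which does not separate out the exceptional index where $f_m$ is associate to $x$; your explicit handling of that case is a genuine, if minor, improvement in rigor.
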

\begin{proof}
Let $v\in R^n$. Then $v\in U_m=f_m U$ if and only if each coordinate of $v$ is divisible by $f_m$. If $v$ is in infinitely many $U_m$ it means that each coordinate of $v$ is divisible by infinitely many irreducible elements of $R$, which implies that $v=0$.
\end{proof}




\begin{lemma}\label{lll3}
Suppose that $U_m$, $U$ are additive subgroups of $R^n$, $U\ss U_m$ and $e(U_m),e(U)$ divide $s>0$ for all $m$. Suppose that $U_m\to U$ (in the space of subsets of $R^n$). Then for any $v\in R^n$ we have that the subgroups $V_m$ with triples $(s,U_m,v)$ converge to subgroup $V$ with triple $(s,U,v)$.    
\end{lemma}
\begin{proof}
Note that $(w,t)\in V_m$ if and only if $s|t$ and $(w,t)(v,s)^{-t/s}\in U_m$. This event converges to $(w,t)(v,s)^{-t/s}\in U$, which is true if and only if $(w,t)\in V$.
\end{proof}

\begin{lemma}\label{count}
The number of submodules $M$ of $R^k$ such that $\dim_{\mathbb{F}_p}R^k/M=a$ is equal to $p^{ak}-p^{(a-1)k}$ for $a>0$ and to $1$ for $a=0$.
\end{lemma}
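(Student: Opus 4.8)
The plan is to induct on $k$, peeling off one coordinate at a time, and to package the resulting recursion in a generating function. Throughout, ``codimension'' means $\F_p$-codimension, and I write $N^{(k)}_a$ for the number of submodules $M\ss R^k$ with $\dim_{\F_p}R^k/M=a$.

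\emph{Base case.} For $k=1$ I would count ideals directly. Since $R=\F_p[x,x^{-1}]$ is a PID whose units are the monomials $c x^m$ with $c\in\F_p\sm\{0\}$ and $m\in\Z$, every nonzero ideal $I$ has a unique generator $f\in\F_p[x]$ that is monic with $f(0)\neq 0$. Because $x$ is already invertible modulo such an $f$, one has $R/(f)=\F_p[x]/(f)$, so $\dim_{\F_p}R/I=\deg f$. Counting monic polynomials of degree $a$ with nonzero constant term then gives $N^{(1)}_a=p^a-p^{a-1}$ for $a>0$ and $N^{(1)}_0=1$, which is the claim for $k=1$.

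\emph{Inductive step.} Writing $R^k=R^{k-1}\oplus R$ and letting $\pr:R^k\to R$ be projection to the last coordinate, I associate to each finite-codimension submodule $M$ the pair $(M_0,I)$ with $M_0=M\cap R^{k-1}$ and $I=\pr(M)$; note $R/I$ is a quotient of $R^k/M$, so $I$ has finite codimension. The short exact sequence $0\to R^{k-1}/M_0\to R^k/M\to R/I\to 0$ gives $\dim_{\F_p}R^k/M=\dim_{\F_p}R^{k-1}/M_0+\dim_{\F_p}R/I$. The key counting step is that, for a fixed pair $(M_0,I)$ with $I=(g)$, the submodules $M$ realizing it are exactly $M=M_0+R\cdot(w',g)$ for $w'\in R^{k-1}$, with $w'$ determined only modulo $M_0$; hence there are exactly $|R^{k-1}/M_0|=p^{\dim_{\F_p}R^{k-1}/M_0}$ of them. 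Since every finite-codimension $M$ decomposes this way, this yields the recursion $N^{(k)}_a=\sum_{b+c=a}N^{(k-1)}_b\,p^{\,b}\,N^{(1)}_c$.

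\emph{Solving the recursion.} Setting $Z_k(T)=\sum_{a\ge 0}N^{(k)}_aT^a$, the convolution above reads $Z_k(T)=Z_{k-1}(pT)\,Z_1(T)$, while the base case gives $Z_1(T)=\tfrac{1-T}{1-pT}$. An easy induction then produces $Z_k(T)=\tfrac{1-T}{1-p^kT}$, whose coefficients are precisely $p^{ak}-p^{(a-1)k}$ for $a>0$ and $1$ for $a=0$, as required. I expect the main obstacle to be the counting step in the induction: verifying that the extensions of a fixed $(M_0,I)$ are parametrized freely by $R^{k-1}/M_0$. This requires checking that $R\cdot(w',g)\cap R^{k-1}\ss M_0$ (using that $R$ is a domain, so $r(w',g)\in R^{k-1}$ forces $r=0$), that $M$ recovers $w'$ only up to $M_0$, and that an arbitrary $M$ with the given intersection and projection is indeed of the stated form. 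The base case and the generating-function bookkeeping are routine by comparison.
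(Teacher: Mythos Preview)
Your proof is correct. The base case is handled cleanly via the bijection between finite-codimension ideals of $\F_p[x,x^{-1}]$ and monic $f\in\F_p[x]$ with $f(0)\ne 0$; the inductive step is sound once you observe (as you do) that $R$ being a domain and $g\ne 0$ force $r(w',g)\in R^{k-1}\Rightarrow r=0$, so the fiber over $(M_0,I)$ really is a torsor for $R^{k-1}/M_0$; and the generating-function identity $Z_k(T)=Z_{k-1}(pT)Z_1(T)$ together with $Z_1(T)=(1-T)/(1-pT)$ gives $Z_k(T)=(1-T)/(1-p^kT)$ as claimed.

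There is nothing to compare against in the present paper: its proof of this lemma is simply a citation to \cite[Lemma 3.8]{GK12}. Your argument is a self-contained replacement, essentially computing the submodule zeta function of $R^k$ by an Euler-type recursion on the number of coordinates. One small remark: you might note explicitly that $I$ is automatically nonzero (since $R/I$ is a quotient of the finite module $R^k/M$), which is what licenses the cancellation $rg=0\Rightarrow r=0$ in the fiber count; you allude to this but it is the one place a reader could stumble.
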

\begin{proof}
This is \cite[Lemma 3.8]{GK12}
\end{proof}

\begin{lemma}\label{lll2.75}
For any $n>0$, $b>0$ and $0<r\leq nb$ there is an additive subgroup $U\ss R^n$ such that $e(U)=b$ and $\rk_b(U)=r$. 
\end{lemma}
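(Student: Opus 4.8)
The plan is to work over the subring $S=\F_p[x^b,x^{-b}]\ss R$, over which $R$ is free of rank $b$ with basis $1,x,\dots,x^{b-1}$, so that $R^n=\oplus_{i=1}^n R e_i$ is a free $S$-module of rank $nb$ (this is exactly why $\rk_b(R^n)=nb$). The key observation is that multiplication by $x$ cyclically permutes the rank-one free summands $Sx^j$ of $R$: one has $x\cdot(Sx^j)=Sx^{(j+1)\bmod b}$, using that $x^b\in S$ is a unit. I will build $U$ as a ``diagonal'' $S$-submodule $\oplus_{j=0}^{b-1}\mathfrak a_j x^j$ with each $\mathfrak a_j$ an ideal of $S$; then $\rk_b(U)=\#\{j:\mathfrak a_j\ne 0\}$ (a nonzero ideal of the PID $S$ is free of rank one), while $x^dU=U$ holds iff the tuple $(\mathfrak a_0,\dots,\mathfrak a_{b-1})$ is invariant under the cyclic shift by $d$. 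Hence $e(U)$ is the least positive period of this tuple, and the whole problem becomes one of choosing a tuple of ideals with exactly $r$ nonzero entries and least period $b$.

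First I handle $n=1$. Fix a nonzero prime $\pi$ of $S$, for instance $\pi=x^b-1$, and set $\mathfrak a_j=(\pi^j)$ for $0\le j\le r-1$ and $\mathfrak a_j=0$ otherwise, giving $U=\oplus_{j=0}^{r-1}\pi^jSx^j$. This has exactly $r$ nonzero entries, so $\rk_b(U)=r$. For the period: if $r<b$ the nonzero entries occupy the proper arc $\{0,\dots,r-1\}\ss\Z/b\Z$, whose cyclic stabilizer is trivial; if $r=b$ the entries $(\pi^0),\dots,(\pi^{b-1})$ form a strictly decreasing chain of pairwise distinct ideals, so again no nontrivial shift fixes the tuple. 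In both cases the only $d\in\{0,\dots,b-1\}$ with $x^dU=U$ is $d=0$; since $x^bU=U$ and the set of such exponents is a subgroup of $\Z$, this forces $e(U)=b$.

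For general $n$ I assemble coordinatewise. Given $1\le r\le nb$, write $r=r_1+\dots+r_n$ with $r_1\in[1,b]$ and $r_2,\dots,r_n\in[0,b]$, which is possible precisely because $1\le r\le nb$. In $R e_1$ put the staircase submodule above of rank $r_1$, so its period is exactly $b$; in $R e_i$ ($i\ge 2$) put any $S$-submodule of rank $r_i$, e.g.\ $\oplus_{j=0}^{r_i-1}Sx^je_i$, whose period merely divides $b$. Set $U=\oplus_{i=1}^n U^{(i)}$. Since the summands $R e_i$ are $x$-invariant and the sum is direct, $x^dU=U$ iff $x^dU^{(i)}=U^{(i)}$ for every $i$, so $e(U)=\operatorname{lcm}_i e(U^{(i)})=b$; and $\rk_b$ is additive over the direct sum, giving $\rk_b(U)=\sum_i r_i=r$.

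The only real obstacle is forcing $e(U)$ to equal $b$ rather than a proper divisor: the obvious full-rank choices ($U=R^n$, or $\oplus_j Sx^j$) are too symmetric and have $e(U)=1$. The device of the distinct prime-power ideals $(\pi^j)$ is exactly what breaks the cyclic symmetry while keeping every chosen coordinate direction of full $S$-rank one; everything else — the rank count and the additivity/lcm bookkeeping — is routine.
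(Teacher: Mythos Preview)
Your proof is correct. Both your approach and the paper's reduce to the one-variable case and, for $r<b$, build essentially the same subgroup $\bigoplus_{j=0}^{r-1}Sx^j$ (your $\pi^j$ twists are harmless there, since the proper-arc argument uses only the support). The genuine difference is in the full-rank case $r=b$: the paper invokes a counting lemma from \cite{GK12} (the number of $x^a$-invariant subgroups of $R^n$ of $\F_p$-codimension one is $p^{na}-1$) and shows by an estimate that not every such subgroup with $x^bU=U$ already satisfies $x^aU=U$ for some proper divisor $a\mid b$. You instead give a direct construction, using the strictly decreasing chain $(\pi^0)\supsetneq(\pi^1)\supsetneq\cdots\supsetneq(\pi^{b-1})$ of ideals of $S$ to break the cyclic symmetry. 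Your route is more elementary and self-contained (no external counting lemma) and handles all $0<r\le b$ with a single construction; the paper's counting argument, by contrast, produces finite-index examples, which is a little closer to how the lemma is later fed into Lemma~\ref{lll4}. Your assembly step for general $n$ is also slightly more explicit than the paper's: you make the $\mathrm{lcm}$ of periods visible and ensure $e(U)=b$ by forcing $r_1\ge 1$.
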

\begin{proof}
Consider first the case $r=nb$. For $a\in\mathbb{N}$ let $P_a$ be the set of all subgroups $U\ss R^n$ such that $x^aU=U$ and $\dim_{\F_p} R^n/U=1$. Then $|P_a|=p^{na}-1$ by lemma \ref{count}. We are going to show that the number of elements in $\cup_{a|b, a<b} P_a$ is strictly smaller than the number of elements in $P_b$ (note that $a|b$ implies $P_a\ss P_b$). Then any subgroup in $P_b$ which is not in any $P_a$, $a<b$ will have $e(U)=b$, and since $\dim_{\F_p} R^n/U$ is finite, $\rk_{b}(U)=nb$.

Let $q_1,\dots,q_m$ be all primes that divide $b$, and $a_i=b/q_i$. Then if $a|b$ and $a<b$, there is some $i$ such that $a|a_i$. Hence $\cup_{a|b, a<b} P_a=\cup_i P_{a_i}$. So it suffices to show that $\sum_i p^{na_i}<p^{nb}$. We have that,
\[
\sum_i p^{na_i-nb}=\sum_i (p^{1-q_i})^{na_i}\leq \sum_i 2^{1-q_i} < \sum_{s=1}^{\infty} 2^{-s}=1.
\]
For $r<nb$ note that we can choose a desired subgroup in each summand $R$ of $R^n$, and then add them together to get a subgroup of $R^n$ of desired rank (since rank is additive). Thus it suffices to give a proof for $n=1$. Note also that any element of $R$ can be written as $\sum_{i=0}^{b-1}f_i(x^b)x^i$, where $f_i\in \F_p[y,y^{-1}]$. If $r<b$ take $U$ to be the set of all $\sum_{i=0}^{r-1}f_i(x^b)x^i$. Obviously, if $s<b$ then $x^sU\neq U$ (if $s\geq r$, then $1\in U$ but $x^s\not\in U$, and if $s<r$ then $x^{r-s}\in U$, but $x^r\not\in U$). Thus $e(U)=b$. 

\end{proof}

\begin{lemma}\label{lll0.2}
\[
\rk_{be}(U)=b\rk_{e}(U).
\] 
\end{lemma}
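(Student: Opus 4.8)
The plan is to recognize the identity as an instance of how module rank behaves under restriction of scalars along the inclusion of the subring $S := \F_p[x^b,x^{-b}] \subseteq R$. First I would observe that $R$ is a free $S$-module of rank $b$, with basis $1,x,\dots,x^{b-1}$ (writing $z=x^b$ identifies $S$ with a copy of $R$). Next I would relate the two module structures on $U$ as follows: under the $\rk_e$-rule the scalar $x$ acts on $U$ as multiplication by $x^e$, so $U$ becomes a finitely generated $R$-module $W$ (it is a submodule of $R^n$ equipped with its $\rk_e$-structure, which is free of rank $ne$, and $R$ is Noetherian); under the $\rk_{be}$-rule the scalar $x$ acts as $x^{be}=(x^e)^b$, which is exactly the action of $z=x^b\in S$ on $W$. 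Thus the $\rk_{be}$-module on $U$ is precisely $W$ viewed as an $S$-module by restriction of scalars, and the claim becomes $\rk_S(W)=b\,\rk_R(W)$.

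To prove $\rk_S(W)=b\,\rk_R(W)$ I would invoke the structure theorem for finitely generated modules over the PID $R$: $W\cong R^{r}\oplus T$, where $r=\rk_R(W)=\rk_e(U)$ and $T$ is torsion, hence finite (as already noted in the excerpt, rank $0$ forces a module to be a finite sum of nontrivial quotients of $R$). Restricting scalars to $S$, the free part $R^{r}$ becomes $(R|_S)^{r}\cong S^{br}$ since $R\cong S^{b}$ as $S$-modules, while $T$ remains finite and therefore has $S$-rank $0$. Using additivity of rank (which holds over the PID $R$ via $\rk(M/M')=\rk(M)-\rk(M')$, and likewise over $S\cong R$), I conclude $\rk_S(W)=br+0=b\,\rk_e(U)=\rk_{be}(U)$.

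The only points requiring care are (i) confirming that $U$ with its $\rk_e$-structure really is finitely generated, so that the structure theorem and the rank formalism apply — this follows because it is a submodule of the free module $(R^n,\rk_e\text{-structure})\cong R^{ne}$ over the Noetherian ring $R$ — and (ii) checking that ``restriction of scalars'' genuinely matches the $\rk_{be}$-definition, i.e. that the $S$-action $z\cdot u=x^{be}u$ coincides with iterating the $\rk_e$-action $b$ times. Granting these, the computation is immediate. The main (though mild) obstacle is organizing the identification of the two module structures cleanly; no estimate or delicate argument is involved, since everything reduces to the elementary fact that $R$ is free of rank $b$ over $S$ together with additivity of rank.
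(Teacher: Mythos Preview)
Your proof is correct and takes essentially the same approach as the paper's: both rest on the observation that $R$ is free of rank $b$ over $S=\F_p[x^b,x^{-b}]$, so passing from the $\rk_e$-structure to the $\rk_{be}$-structure multiplies the rank by $b$. The paper's argument is simply the explicit-basis version --- given a basis $f_1,\dots,f_m$ of $U$ under the $\rk_e$-action (which exists since $U$, being a submodule of a free module over the PID $R$, is itself free), the elements $x^{j}f_i$ for $0\le j<b$ form a basis under the $\rk_{be}$-action --- whereas you package the same computation as restriction of scalars and handle a torsion summand that in this setting is necessarily zero.
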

\begin{proof}
Note that if $\rk_{e}(U)=m$ and $f_1,\dots,f_m$ is the basis of $U$ as $R$ module, then $\{x^jf_i|0\leq j<b, 1\leq i\leq m\}$ is the basis of $U$ considered as $R$ module with the action $x*u=x^bu$. Thus the rank of $U$ considered as $R$ module with this $R$ action is $b\rk_{e}(U)$. 
\end{proof}

\begin{lemma}\label{lll4}
Let $M$ be a finitely generated $R$ module, $n=\rk(M)$. Let $U$ be an additive subgroup of $M$ with $e(U)=e$, $r_{M,U}=ne-\rk_{e}(U)>0$, let $b>0$ and $r'<r_{M,U}b$. Then there exist a sequence $\{U_m\}$ such that $U\ss U_m\ss M$,  $U_m\neq U$, $e(U_m)=eb$, $U_m\to U$ (in the space of subsets of $M$), and $r_{M,U_m}= r'$.
\end{lemma}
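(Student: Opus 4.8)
The plan is to reduce the construction to the free quotient of $U$ inside the $e$-shifted module structure, build a single ``model'' subgroup of the right period $eb$ and rank via Lemma \ref{lll2.75}, and then push it towards $U$ using the multiplication-by-irreducibles trick of Lemma \ref{lll2.5}. Write $\rho:=r_{M,U}=ne-\rk_e(U)>0$. First I would record two module structures on $M$. Let $R':=\F_p[w,w^{-1}]$ act via $w\cdot m:=x^em$; since $x^eU=U$, the set $U$ is an $R'$-submodule, so with $N$ denoting $M$ under this structure we have $\rk(N)=ne$ and $\rk_{R'}(U)=\rk_e(U)=ne-\rho$. Let $\pi:N\to Q:=N/U$ be the quotient; over the PID $R'$ we may write $Q\cong (R')^{\rho}\oplus T'$ with $T'$ finite and free rank exactly $\rho$. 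I also keep the coarser structure given by $z:=w^b=x^{eb}$, under which $U$ is a $\tilde R:=\F_p[z,z^{-1}]$-submodule with $\rk_{eb}(U)=b\,\rk_e(U)$ by Lemma \ref{lll0.2}.

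Next I would produce the model and the sequence. Applying Lemma \ref{lll2.75} to $(R')^{\rho}$ with parameters $\rho$, $b$ and target rank $r:=b\rho-r'$ (valid since $0\le r'<b\rho$ gives $0<r\le b\rho$), I obtain an additive subgroup $G\subset (R')^{\rho}\subset Q$ with $e(G)=b$ (so $zG=w^bG=G$ but $w^dG\neq G$ for every proper divisor $d\mid b$) and $\rk_b(G)=b\rho-r'$. I then set $S_m:=f_mG$, where the $f_m$ run over the irreducibles of $\F_p[w]$, and $U_m:=\pi^{-1}(S_m)$, so that $U\subset U_m$. Because $zG=G$, each $S_m$ is $z$-invariant, hence $U_m$ is a $\tilde R$-submodule of $M$ with $x^{eb}U_m=U_m$; multiplication by the nonzero $f_m$ preserves $\tilde R$-rank, so additivity of rank over $0\to U\to U_m\to S_m\to 0$ gives $\rk_{eb}(U_m)=\rk_{eb}(U)+\rk_b(G)=b(ne-\rho)+(b\rho-r')=bne-r'$, i.e. $r_{M,U_m}=n(eb)-\rk_{eb}(U_m)=r'$ once $e(U_m)=eb$ is known. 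Convergence $S_m\to 0$ is Lemma \ref{lll2.5} applied inside the free part $(R')^{\rho}$ (elements outside $(R')^{\rho}$ never lie in $S_m$), and since $v\in U_m\iff \pi(v)\in S_m$ stabilizes to $\pi(v)=0\iff v\in U$, this yields $U_m\to U$; finally $S_m\neq 0$ gives $U_m\neq U$.

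The main obstacle is verifying $e(U_m)=eb$; everything else is bookkeeping. We only know $e(U_m)\mid eb$, and must exclude each proper divisor $d\mid eb$. I would split them into two families. If $e\nmid d$, I use that $\{V:x^dV=V\}$ is closed, being an intersection of clopen membership conditions: were $x^dU_m=U_m$ for infinitely many $m$, then by $U_m\to U$ the limit would satisfy $x^dU=U$, forcing $e\mid d$, a contradiction; so this holds for only finitely many $m$. If $e\mid d$, write $d=ed''$ with $d''\mid b$ and $d''<b$; then $x^{ed''}=w^{d''}$ descends to $Q$ and intertwines $\pi$, so $x^{ed''}U_m=U_m\iff w^{d''}S_m=S_m$, and since multiplication by $f_m$ is injective on the torsion-free $(R')^{\rho}$ this is equivalent to $w^{d''}G=G$, which is false because $e(G)=b$; hence it never holds. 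As $eb$ has finitely many divisors, there is an $m_0$ beyond which no proper divisor works, so $e(U_m)=eb$ for all $m\ge m_0$, and discarding the first $m_0$ terms gives the required sequence.
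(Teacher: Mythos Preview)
Your proof is correct and follows the same route as the paper's: twist to the $x^e$-action so that $U$ becomes a submodule, pass to the quotient $(R')^{\rho}\oplus T'$, build a model subgroup of period $b$ and the right rank via Lemma~\ref{lll2.75}, push it to zero with the irreducibles of Lemma~\ref{lll2.5}, and pull back. The one place you go beyond the paper is in verifying $e(U_m)=eb$ in the \emph{original} $R$-structure rather than just $e_{M'}(U_m)=b$ in the twisted one: your closure argument for divisors $d\mid eb$ with $e\nmid d$ (using that $\{V:x^dV=V\}$ is closed and $U_m\to U$) cleanly handles a detail the paper's ``reduce to $e=1$'' step leaves unaddressed.
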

\begin{proof}
First we reduce to the case when $e=1$, that is when $U$ is a submodule. Indeed, we can consider new module $M'$, with the set of elements $M$ and new action of $R$ given by $x*m=x^em$. Note that $M'$ is still finitely generated and $\rank (M')=ne$.

Now we reduce the Lemma to the case $U=0$. Indeed since $U\ss U_m$ we can construct $U_m/U\subset M/U$ and then pullback to $U_m\subset M$. Note that $\rk(M/U)=n-\rk(U)=r_{M,U}$. Also, $e(U_m/U)=e(U_m)=b$, hence using Lemma \ref{lll0.2}, $\rk_b(U_m)=b\rk(U)+\rk_b(U_m/U)$, and thus $r_{M,U_m}=\rk(M)b-\rk_b(U_m)=\rk(M/U)b-\rk_b(U_m/U)=r_{M/U,U_m/U}$.

So now we need to prove that given $b>0$ and $r'<nb$ there exist $U_m\ss M$ such that $e(U_m)=b$, $U_m\to 0$, and $r_{M,U_m}=r'$, that is $\rk_{b}(U_m)=nb-r'$. Note that $M$ is isomorphic to $R^n$ plus some finite module, so we may as well construct such a sequence $U_m$ in $R^n$. Also, it suffices to construct just one additive subgroup $U'\ss R^n$ such that $e(U')=b$ and $\rk_{b}(U')=nb-r'$. Indeed then we may let $U_m=f_m U'$ with $f_m$ as in Lemma \ref{lll2.5}. We will have that $e(U_m)=e(U')$, $\rk_{b}(U_m)=\rk_{b}(U')$, and $U_m\to 0$ by Lemma \ref{lll2.5}. Thus the proof is finished by Lemma \ref{lll2.75}.
 \end{proof}
 
\subsection{Properties of the map $\Phi$}
We can now prove Proposition \ref{thm:Q-key} (recall that numbers $t_V$ and $r_V$ were defined in Definition \ref{tv}):

\begin{lemma}\label{a1}
Suppose $V_m\to V$ in $\ccZ$. Then by passing to a subsequence one may assume that $t_{V_m}$ and $r_{V_m}$ are constant. Moreover, then $t_{V_m}| t_V$ and $t_{V_m}r_{V_m}\leq t_V r_V$. If the sequence $V_m$ does not stabilize to $V$ then the last inequality is strict.
\end{lemma}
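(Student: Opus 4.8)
The plan is to pass to a subsequence on which the relevant invariants are constant, then get the divisibility from a computation with periods and the inequality from the rank identity $t_V r_V = ns - \rk_s(U)$, where $s=\pi_1(V)$ and $U = V\cap\cA_{n,p}$. First I would apply Lemma \ref{lll1} to fix $m_0$ with $V \ss V_m$ for all $m \ge m_0$. Writing $(s,U,v)$ and $(s_m,U_m,v_m)$ for the triples supplied by Lemma \ref{group}, this inclusion forces $s_m \mid s$, $U \ss U_m$, and $v \equiv \varphi_{s/s_m}(x^{s_m})v_m \pmod{U_m}$. Since $s_m$ then takes only finitely many values and $e(U_m) \mid s_m \mid s$, both $t_{V_m} = s_m/e(U_m)\le s$ and $r_{V_m} = n\,e(U_m) - \rk_{e(U_m)}(U_m) \le ns$ are bounded, so I can pass to a subsequence on which $s_m \equiv s_*$, $e(U_m)\equiv e_*$, $t_{V_m}$, and $r_{V_m}$ are all constant.

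The step I expect to be the main obstacle is the divisibility $t_{V_m}\mid t_V$, which I would reduce to showing $e(U)\mid e_*$. The idea is to combine Lemma \ref{lll2} (which gives $U_m \to U$ as subsets of $\cA_{n,p}$) with $U \ss U_m$: because $x^{e_*}U_m = U_m$, any $w \in x^{e_*}U$ lies in $U_m$ for all large $m$, hence $w \in U$ by convergence, so $x^{e_*}U \ss U$. As $e_* \mid s$, iterating this inclusion yields a chain $U \supseteq x^{e_*}U \supseteq \cdots \supseteq x^{s}U = U$ that must collapse, forcing $x^{e_*}U = U$ and thus $e(U)\mid e_*$. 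Together with $s_* \mid s$ this gives $t_V/t_{V_m} = (s/s_*)(e_*/e(U)) \in \N$, i.e. $t_{V_m}\mid t_V$.

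For the inequality I would use Lemma \ref{lll0.2} to rewrite $t_V r_V = ns - \rk_s(U)$ and, with $d = s/s_*$, $t_{V_m}r_{V_m} = n s_* - \rk_{s_*}(U_m) = (ns - \rk_s(U_m))/d$. Since $U \ss U_m$ gives $\rk_s(U)\le\rk_s(U_m)$ and $d\ge1$,
\[
t_{V_m}r_{V_m} = \frac{ns-\rk_s(U_m)}{d} \le ns-\rk_s(U_m) \le ns-\rk_s(U) = t_V r_V .
\]

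Finally, for strictness I would trace the equality case back through this chain. Equality forces $\rk_s(U_m)=\rk_s(U)$, so applying Lemma \ref{lll0.1} over the subring $\F_p[x^s,x^{-s}]$ (over which $\cA_{n,p}$ is free of rank $ns$) yields $U_m = U$ for all large $m$, whence $r_{V_m}=r_V$. If $r_V>0$ this forces $t_{V_m}=t_V$, so $s_*=s$ and the coset condition collapses (as $\varphi_1=1$) to $v\equiv v_m\pmod U$, giving $V_m=V$. If instead $r_V=0$, then $U$ has finite index in $\cA_{n,p}$, so only finitely many triples $(s_*,U,v_m)$ occur and the convergent sequence $V_m$ must be eventually constant, again equal to $V$. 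In either case $V_m$ stabilizes to $V$, so contrapositively, if the sequence does not stabilize then the inequality is strict.
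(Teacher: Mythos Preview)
Your proof is correct and follows the same overall skeleton as the paper's: pass to a subsequence using Lemma~\ref{lll1} so that $V\subset V_m$ and the invariants $s_m,e(U_m),t_{V_m},r_{V_m}$ are constant, deduce $e(U)\mid e_*$ from $U_m\to U$ and $U\subset U_m$, and then compare ranks. The algebraic bookkeeping differs, however. The paper works at level $e'=e_*$: it shows $r'\le br$ (with $b=e_*/e(U)$) via $\rk_{e'}(U)\le\rk_{e'}(U_m)$, and separately $t'b\mid t$, then multiplies. You instead lift everything to level $s$ via the identity $t_Vr_V=ns-\rk_s(U)$ (a clean consequence of Lemma~\ref{lll0.2}), which makes the inequality a one-line chain. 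The cost is in the equality analysis: the paper's version immediately gives $b=t/t'$, hence $s'=s$, so no case distinction is needed; your chain only yields $\rk_s(U_m)=\rk_s(U)$ together with ``$d=1$ or $ns=\rk_s(U_m)$'', which you then handle by splitting on $r_V>0$ versus $r_V=0$. Both routes are valid, and both tacitly invoke Lemma~\ref{lll0.1} over the subring $\F_p[x^s,x^{-s}]$ (respectively $\F_p[x^{e'},x^{-e'}]$), which is the same harmless extension.
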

\begin{proof}
Let $(s_m, U_m,v_m)$ be the triples of $V_m$, and denote $e_m=e(U_m)$. By Lemma \ref{lll1}, passing to a subsequence we may suppose that $V\ss V_m$. Thus, by Lemma \ref{3_1} $s_m|s$, and hence, passing to a subsequence we may suppose that $s_m$ is constant, say $s_m=s'$. Also by Lemma \ref{3_1}, $x^{s'} U_m=U_m$, and therefore by the remark after Definition \ref{expp}, $e_m| s'$. By passing to a subsequence we may assume that $e_m$ is constant, say $e'$. Finally, $r_{V_m}=ne'-\rk_{e'}(U_m)$, so $0\leq r_{V_m}\leq ne'$, therefore, by passing to a subsequence we may assume that $r_{V_m}$ is constant, say $r'$. Also, $t_{V_m}=s'/e'$ is constant, denote it by $t'$.

Denote $t=t_V$ and $r=r_V$. It is left to prove that $t'|t$ and $t'r'\leq tr$. By Lemmas \ref{lll1} and \ref{lll2}, $U_m\to U$ and $U\ss U_m$, thus in particular $U=\cap_m U_m$. Hence $x^{e'}U=U$, and by the remark after Definition \ref{expp}, $e|e'$. Let $b=e'/e$. Since $s'|s$ and $s=te$, $s'=t'e'=t'be$, we have that $t'b|t$. Thus $t'|t$. 

Since $U\ss U_m$, we have that $b \rk_{e}(U)=\rk_{e'}(U)\leq \rk_{e'}(U_m)$. Thus $r'=ne'-\rk_{e'}(U_m)\leq ne'-\rk_{e'}(U)=neb-b\rk_{e}(U)=br$. Also, $t'b|t$, thus $b\leq t/t'$. It follows that $r'\leq tr/t'$ and thus $t'r'\leq tr$. Notice that the equality holds if and only if $b=t/t'$ and $\rk_{e'}(U_m)=\rk_{e'}(U)$. Thus if the equality holds, $t'e'=t'eb=te$, and by Lemma \ref{lll0.1} $U_m$ stabilizes to $U$. Hence $V_m$ has triples $(te,U,v_m)$. Since $V\subset V_m$, we have by Lemma \ref{3_1} that $v_m=v\mod U$ and so, again by Lemma \ref{3_1}, $V_m=V$.
\end{proof}


The following Lemma gives a converse statement.
\begin{lemma}\label{a2}
Suppose $(t',r')<(t,r)$ in $\Q$. Let $V$ be a subgroup of $\cL_{n,p}$, not contained in $ R^n$, such that $t_V=t$ and $r_V=r$. Then there exists a sequence of subgroups $V_m$, not contained in $ R^n$, such that $t_{V_m}=t'$, $r_{V_m}=r'$, and $V_m\to V$, nonstabilizing. 
\end{lemma}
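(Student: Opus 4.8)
The plan is to build the sequence by freezing the $\mathbb{Z}$-projection $s = \pi_1(V)$ and the coset representative $v$, and varying only the submodule component. Write the triple of $V$ as $(s, U, v)$ in the sense of Lemma \ref{3_1}, and set $e = e(U)$. By Definition \ref{tv} we then have $s = te$ and $r = r_{R^n, U}$.

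First I would extract the arithmetic content of the hypothesis. By the definition of $\Q$, the inequality $(t',r') < (t,r)$ means $t' \mid t$ and $t'r' < tr$. In particular $tr > 0$, so $r > 0$, and $b := t/t'$ is a positive integer. Dividing $t'r' < tr$ by $t'$ gives $r' < br$, which is precisely the numerical hypothesis required to apply Lemma \ref{lll4}.

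Next I would invoke Lemma \ref{lll4} with $M = R^n$, the given subgroup $U$ (which satisfies $e(U) = e$ and $r_{R^n, U} = r > 0$), the integer $b$, and target $r'$. This produces additive subgroups $U_m \subseteq R^n$ with $U \subseteq U_m$, $U_m \neq U$, $e(U_m) = eb$, $U_m \to U$ in the space of subsets, and $r_{R^n, U_m} = r'$. I then define $V_m$ to be the subgroup with triple $(s, U_m, v)$, reusing the same $s$ and $v$ as for $V$. This is a legitimate triple: since $e(U_m) = eb$ and $s = te = t'be$, we have $e(U_m) \mid s$, so $x^s U_m = U_m$ as Lemma \ref{3_1} requires. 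Because $U \subseteq U_m$, $U_m \to U$, and both $e(U)$ and $e(U_m)$ divide $s$, Lemma \ref{lll3} yields $V_m \to V$.

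Finally I would read off the invariants: $t_{V_m} = s/e(U_m) = te/(eb) = t/b = t'$ and $r_{V_m} = r_{R^n, U_m} = r'$. Since $\pi_1(V_m) = s > 0$, each $V_m$ fails to lie in $R^n$; and since $U_m \neq U$ while $s$ and $v$ agree with those of $V$, Lemma \ref{3_1} gives $V_m \neq V$ for every $m$, so the convergence is nonstabilizing. The argument is essentially bookkeeping once Lemmas \ref{lll4} and \ref{lll3} are available; the only points demanding attention are matching the poset inequality $t'r' < tr$ to the rank inequality $r' < br$ of Lemma \ref{lll4}, and verifying the divisibility $e(U_m) = eb \mid s$. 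I do not anticipate any substantive obstacle beyond this, since the genuine construction (the sequences $U_m$) and the genuine convergence criterion have already been established in the preceding lemmas.
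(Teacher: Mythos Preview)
Your proof is correct and follows exactly the same route as the paper: write $s=te$, set $b=t/t'$, use Lemma \ref{lll4} to produce the subgroups $U_m$ with $e(U_m)=eb$ and $r_{R^n,U_m}=r'$, define $V_m$ via the triple $(s,U_m,v)$, and conclude $V_m\to V$ by Lemma \ref{lll3}. Your write-up is in fact more complete than the paper's, since you explicitly verify the divisibility $e(U_m)\mid s$, compute $t_{V_m}$, and check that the sequence is nonstabilizing.
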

\begin{proof}
Let $(te,U,v)$ be a triple of $V$. Since $tr>t'r'$, we have that $r>0$. Let $b=t/t'$, so that $r'<br$. By Lemma \ref{lll4}, there are $U\ss U_m\ss R^n$ such that $e(U_m)=eb$, $r_{R^n,U_m}=r'$ and $U_m\to U$. Let $V_m$ be the subgroups defined by triples $(te, U_m,v)$. Then $V_m\to V$ by Lemma \ref{lll3}.
\end{proof}

Theorem \ref{thm:Q-key} follows immediately from Lemmas \ref{a1}, \ref{a2}.

{\small

}


\begin{thebibliography}{1000000}



\bibitem[AB+11]{ABBGNRS11} M. Abert, N. Bergeron, I. Biringer, T. Gelander, N. Nikolov, J. Raimbault and I. Samet, \textit{On the growth of Betti numbers of locally symmetric spaces}. Comptes Rendus Mathematique, Volume 349, Issues 15--16, August 2011,  831--835.

\bibitem[AB+12]{ABBGNRS12} M. Abert, N. Bergeron, I. Biringer, T. Gelander, N. Nikolov, J. Raimbault and I. Samet, \textit{On the growth of $L^2$-invariants for sequences of lattices in Lie groups}. arXiv:1210.2961


\bibitem[AGV12]{AGV12} M. Abert, Y. Glasner and B. Virag,  \textit{Kesten's theorem for Invariant Random Subgroups}. arXiv:1201.3399






 \bibitem[Bo10]{Bo10} L. Bowen, \textit{Random walks on coset spaces with applications to Furstenberg entropy}. To appear in Invent. Math.

\bibitem[Bo12]{Bo12} L. Bowen \textit{Invariant random subgroups of the free group}. arXiv:1204.5939.

\bibitem[BS06]{BS06} U. Bader, Y. Shalom, \textit{Factor and normal subgroup theorems for lattices in products of groups}. Invent. Math. 163 (2006), 415-454


\bibitem[BV93]{BV93} M. E. B. Bekka and A. Valette, \textit{Kazhdan's property (T) and amenable representations},  Math. Z. 212 (1993), no. 2, 293Ð299.



\bibitem[DS02]{DS02} S. G. Dani, \textit{On conjugacy classes of closed subgroups and stabilizers of Borel actions of Lie groups}. Ergodic Theory Dynam. Systems 22 (2002), no. 6, 1697--1714.

\bibitem[DM11]{DM11} A. Dudko and K. Medynets, \textit{On characters of inductive limits of symmetric groups}, Journal of Functional Analysis
Volume 264, Issue 7, 1 April 2013, 1565--1598.

\bibitem[DM12]{DM12} A. Dudko and K. Medynets, \textit{Finite factor representations of Higman-Thompson groups}, to appear at Groups, Geometry, and Dynamics.



\bibitem[GK12]{GK12} R. Grigorchuk and R. Kravchenko, \textit{On the lattice of subgroups of the lamplighter group}. arXiv:1203.5800




\bibitem[Gr11]{Gr11} R. Grigorchuk, \textit{Some topics of dynamics of group actions on rooted trees.}, The Proceedings of the Steklov Institute of Math., v. 273 (2011), 64--175.



\bibitem[GS99]{GS99} V. Golodets and S. D. Sinelshchikov, \textit{On the conjugacy and isomorphism problems for stabilizers of Lie group actions}. Ergodic Theory Dynam. 

\bibitem[GW97]{GW97} E. Glasner and B. Weiss, \textit{KazhdanÕs property T and the geometry of the collection of invariant measures},
Geometry and Functional Analysis, vol. 7 (1997), 917--935.






\bibitem[JS87]{JS87} V. F. R. Jones and K. Schmidt, \textit{Asymptotically Invariant Sequences and Approximate Finiteness}, American Journal of Mathematics, Vol. 109, No. 1 (Feb., 1987), pp. 91-114.

\bibitem[Kro85]{Kro85} P. H. Kropholler, \textit{A note on the cohomology of metabelian groups}, Math. Proc. Cambridge Philos. Soc. 98 (1985), no. 3, 437--445. 

\bibitem[LOS78]{LOS78} J. Lindenstrauss, G.H. Olsen, Y. Sternfeld, \textit{The Poulsen simplex}, Ann. Inst. Fourier (Grenoble) 28 (1978), 91--114.

\bibitem[Ol79]{Ol79} G. H. Olsen, \textit{On simplices and the Poulsen simplex}. Functional analysis: surveys and recent results, II (Proc. Second Conf. Functional Anal., Univ. Paderborn, Paderborn, 1979), pp. 31--52, Notas Mat., 68, North-Holland, Amsterdam-New York, 1980

\bibitem[Sa11]{Sa11} D. Savchuk, \textit{Schreier Graphs of Actions of ThompsonÕs Group F on the Unit Interval and on the Cantor Set}, arXiv: 1105.4017.

\bibitem[Sch84]{Sch84} K. Schmidt, \textit{Asymptotic properties of unitary representations and mixing},  Proc. London Math. Soc. (3) 48 (1984), no. 3, 445Ð460.

\bibitem[SZ94]{SZ94} G. Stuck and R. J. Zimmer, \textit{Stabilizers for ergodic actions of higher rank semisimple groups}, Ann. of Math. (2) 139 (1994), no. 3, 723--747.

\bibitem[Ve10]{Ve10} A. Vershik, \textit{Nonfree Actions of Countable Groups and their Characters}, Zapiski Nauchn. Semin. POMI 378, 5-16 (2010).
English translation: J. Math. Sci. 174, No. 1, 1-6 (2011).


\bibitem[Ve11]{Ve11} A. Vershik, \textit{Totally nonfree actions and infinite symmetric group}. Moscow Math. J. 12, No. 1, 193--212 (2012).


\bibitem[Vo12]{Vo12} Y. Vorobets, \textit{Notes on the Schreier graphs of the Grigorchuk group}. Contemporary Mathematics 567 (2012), 221--248.


\end{thebibliography}
\end{document}